\newcommand{\ptd}{\mathcal{P}^2(\td)}
\newcommand{\tdu}{\td\times U}
\newcommand{\ptdu}{\mathcal{P}^2(\td\times U)}
\newcommand{\tdv}{\td\times V}
\newcommand{\tdubc}[1][c]{\td\times U\times \mathbb{B}_{#1}}
\newcommand{\ptdubc}[1][c]{\mathcal{P}^2(\td\times U\times \mathbb{B}_{#1})}
\newcommand{\ptdvbc}[1][c]{\mathcal{P}^2(\td\times V\times \mathbb{B}_{#1})}
\newcommand{\pctu}[2]{\mathcal{P}^2(\mathcal{C}_{#1,#2}\times U)}
\newcommand{\ctu}[2]{\mathcal{C}_{#1,#2}\times U}
\newcommand{\hae}[1]{\hat{e}_{#1}}
\newcommand{\et}[1]{e^1_{#1}}
\newcommand{\ud}{\mathrm{u\text{-}d}}
\newcommand{\vd}{\mathrm{v\text{-}d}}
\title{A stability property in  mean field type differential games}
\author{Yurii Averboukh\footnote{Krasovskii Institute of Mathematics and Mechanics,\ \ 
\email{ayv@imm.uran.ru}} }
\date{}
\begin{document}
	\maketitle
	
\begin{abstract}
	The paper is concerned with the feedback approach to the deterministic mean field type differential games. Previously, it was shown that suboptimal strategies in the mean field type differential game can constructed based on functions of time and probability satisfying the stability condition. This property realizes the dynamic programming principle for the constant control of one player. We present the infinitesimal form of this condition involving analogs of the directional derivatives. In particular, we obtain the  characterization of the value function of the deterministic mean field type differential game in the terms of directional derivatives and the set of  directions feasible by virtue of the dynamics of the game.
	\keywords{mean field type differential games, nonsmooth analysis, Wasserstein distance, directional derivative}
	\msccode{93C30, 49L20, 46G05, 93A15}
	
\end{abstract}

\section{Introduction} The theory of mean field type differential game studies  control systems consisting of a large number of similar agents with the mean field interaction between them governed by two players with opposite purposes. This problem is a natural extension of the theory of mean field type control systems dealing with the case of only one decision maker. Let us emphasize that the state space for the mean field type differential games and the mean field type control systems is the space of probability measures. This space is only metric. 

The theory of mean field type control systems started with  paper~\cite{ahmed_ding_controlld}. Nowadays, the mean field field type control theory is developed for the case when  the dynamics of each agent is given by SDE (see~\cite{Bensoussan_Frehse_Yam_book} and reference therein). For such type of systems the necessary optimality conditions based on forward-backward SDEs were obtained in ~\cite{Andersson_Djehiche_2011},~\cite{Buckdahn_Boualem_Li_PMP_SDE},~\cite{Carmona_Delarue_PMP}. The existence result of the optimal control is also proved~\cite{Bahlali_Mezerdi_Mezerdi_existence}. The dynamic programming principle for mean field type control systems was discussed in~\cite{Bensoussan_Frehse_Yam_equation},~\cite{Lauriere_Pironneau_DPP_MF_control},~\cite{Pham_Wei_2015_DPP_2016},~\cite{Pham_Wei_2015_Bellman}.
Additionally, let us mention papers~\cite{Marigonda_Cavagnari},~\cite{Marigonda_et_al_2015} concerning with the case of deterministic mean field type control systems.

The mean field type differential games previously were studied in~\cite{Averbokh_mfdg},~\cite{Cosso_Pham},~\cite{Djehiche_Hamdine}. Recall that as in the case of finite dimensional differential games one can formalize mean field type differential game using either nonanticipative or feedback strategies. 

A nonanticipative strategy is a mapping assigning to a control of a player a control of the other player satisfying the feasibility condition. Notice that this approach assumes that the players observe the control of each other. The nonanticipative strategies were introduced for the finite dimensional differential game for the mean field type differential games in~\cite{elliot_kalton},~\cite{varaya_lin}. This approach was developed in~\cite{Cosso_Pham}. In that paper the existence result for the value function is proved and the dynamic programming is presented. 

The assumption that the players have information only about the current state leads to the feedback formalization. For the finite dimensional differential game feedback strategies were introduced by Krasovskii and Subbotin~\cite{NN_PDG_en}. The extension of their approach to the mean field type differential is presented in~\cite{Averbokh_mfdg}. The design of feedback strategies can be performed using so called $u$- and $v$-stable functions defined on the product of the time interval and the state space and taking  values in the set of real values. Given a $u$-stable (respectively, $v$-stable) function, the first (respectively, second) player can construct a suboptimal strategy guaranteeing the reward greater (respectively, smaller) than the value of the given function at the initial position~\cite{NN_PDG_en} (see also~\cite{Averbokh_mfdg} for the mean field type differential games). The $u$- (respectively $v$-) stability property means that the epigraph (respectively, hypograph) is viable with respect to the dynamics corresponding to the constant control of the second (respectively, first) player. Notice that the stability property realizes the dynamic programming principle for the frozen control of one player. It is proved that  the value function is simultaneously $u$- and $v$-stable~\cite{Averbokh_mfdg},~\cite{NN_PDG_en},~\cite{Subb_book}. 

The approaches based on nonaticipative and feedback strategies should be equivalent. This statement is proved for the case of finite dimensional differential games in~\cite{subb_chen}. Unfortunately, up to now this equivalence is not obtained for the mean field type differential games.

Recall that the dynamic programming reduces the original control problem to the Hamilton-Jacobi PDE,  which has no smooth solution in the general case. In particular, it was proved for the finite dimensional case that the $u$-stable (respectively, $v$-stable) function  is a supersolution (respectively, subsolution) of the corresponding Hamilton-Jacobi equation~\cite{Subb_book}.  For the finite dimensional case one can use two equivalent tools of nonsmooth analysis to define the viscosity solution of the Hamilton-Jacobi PDE~\cite{bardi},~\cite{frankowska},~\cite{Subb_book},~\cite{vinter_wolenski}. First is based on sub- and superdifferentials, whereas the second involves directional derivatives. 

Nowadays, only the notions of sub- and superdifferentials are introduced for the functions of probability. Lions in~\cite{lions_lecture} proposed the   extrinsic approach which is based on the lifting a probability measure to a random variable.  The intrinsic  definition of sub- and superdifferentials was introduced in~\cite{Ambrosio}. The link between these two approaches is discussed in~\cite{Gangbo_Tudorascu}. Solutions of Hamilton-Jacobi PDEs in the space of probabilities in the framework of the extrinsic approach were studied  in ~\cite{Carmona_master},~\cite{Cosso_Pham},~\cite{Gangbo_Nguen_Tudorascu},~\cite{Gangbo_Tudorascu},~\cite{Pham_Wei_2015_Bellman}. The intrinsic sub- and superdifferentials were also used for this class of equations (see~\cite{Marigonda_Cavagnari},~\cite{Marigonda_et_al_2015},~\cite{Marigonda_Quincampoix}). 


The paper aims to extend the approach involving  directional derivatives to the case of mean field type differential games. The main result of the paper  is the   infinitesimal forms of  $u$- and $v$-stability properties for the deterministic mean field type differential game expressed in the terms of directional derivatives. This statement is a modification of the famous viability theorem ~\cite{Aubin} (see also~\cite{Averboukh_viability} for the viability theorem for the mean field type control systems). In particular, we get the characterization of the value function in the terms of directional derivatives. 

The paper is organized as follows. In Section~\ref{sect:preliminaries} we introduce the general notation used in the paper. Furthermore,  in this section we present the feedback formalization of the deterministic mean field type differential game and recall the  link between the stability property and the value function of the game. The main result of the paper is presented in Section~\ref{sect:main_result}. Section~\ref{sect:equivalence} contains the characterization of flows of propositions produced by distributions of constant controls. This reformulates the stability condition in the terms of differential inclusions. The properties of the shift operator in the space of probabilities used in the proof of the sufficiency part of the viability theorem are given in Section~\ref{sect:shift_operator}. Finally, the sufficiency and necessity parts of the main result are proved in Sections~\ref{sect:proof:sufficiency},~\ref{sect:proof:necessity} respectively.

\section{Preliminaries}\label{sect:preliminaries}
First, let us set down the notation for the paper.
\begin{itemize}
	\item If $X_1,\ldots, X_n$ are sets, $i,j\in \{1,\ldots,n\}$, $x=(x_1,\ldots,x_n)\in X_1\times\ldots\times X_n$, then $$\mathrm{p}^i(x)\triangleq x_i,\ \ \mathrm{p}^{i,j}(x)\triangleq (x_i,x_j).$$
	\item If $(\Omega_1,\Sigma_1)$, $(\Omega_2,\Sigma_2)$ are measurable spaces, $m$ is a probability on $\Sigma_1$, $h:\Omega_1\rightarrow\Omega_2$ is measurable, then $h_\#m$ stands for the push-forward measure defined by the rule: for $\Upsilon\in\Sigma_2$,
	$$(h_\#m)(\Upsilon)\triangleq m(h^{-1}(\Upsilon)). $$
	\item Given two metric spaces $(X,\rho_X)$ and $(Y,\rho_Y)$, we assume that the metric on $X\times Y$ is $$\rho_{X\times Y}((x',y'),(x'',y''))\triangleq \left[\left(\rho_X(x',x'')\right)^2+\left(\rho_Y(y',y'')\right)^2\right]^{1/2}. $$
	\item If $(X,\rho_X)$ is a separable metric space, then $\mathcal{P}(X)$ stands the set of Borel probabilities on $X$. We endow $\mathcal{P}(X)$ with the topology of narrow convergence i.e. the sequence of probabilities $\{m_n\}_{n=1}^\infty$ converges narrowly to $m$ if, for any $\phi\in C_b(X)$,
	$$\int_{X}\phi(x)m_n(dx)\rightarrow \int_X\phi(x)m(dx)\text{ as }n\rightarrow\infty. $$
	\item $\mathcal{P}^2(X)$ denotes the set of probabilities $m\in\mathcal{P}(X)$ such that, for some (equivalently, any) $x_*\in X$,
	$$\int_X(\rho_X(x,x_*))^2 m(dx)<\infty. $$
	\item The 2-Wasserstein metric on $\mathcal{P}^2(X)$ is defined as follows: if $m_1,m_2\in\mathcal{P}_2(X)$, then
	\begin{equation}\label{notation:intro:wasserstein}
	W_2(m_1,m_2)=\left[\inf_{\pi\in \Pi(m_1,m_2)}\int_{X\times X}(\rho_X(x_1,x_2))^2\pi(d(x_1,x_2))\right]^{1/2}, 
	\end{equation} where $\Pi(m_1,m_2)$ stands for the set of transport plans between $m_1$ and $m_2$ i.e. probabilities on $X\times X$ with the marginals equal to $m_1$ and $m_2$. Notice that if $W_2(m_n,m)\rightarrow 0$ as $n\rightarrow \infty$, then the sequence $\{m_n\}$ converges to $m$ narrowly. When $X$ is compact, the converse is also true and $\mathcal{P}^2(X)$ is compact itself..
	\item Denote by $\Pi^0(m_1,m_2)$ the set of all plans $\pi\in \Pi(m_1,m_2)$ minimizing the right-hand side in (\ref{notation:intro:wasserstein}).
	\item If $(X,\rho_X)$, $(Y,\rho_Y)$ are separable metric spaces, $m$ is a measure on $X$, then denote by $\Lambda(X,m,Y)$ the set of measures on $(X\times Y)$ with the marginal on $X$ equal to $m$. By the disintegration theorem, given measure $\alpha\in \Lambda(X,m,Y)$, there exists, a weakly measurable family of probabilities $\alpha(\cdot|x)\in\mathcal{P}(Y)$ such that, for any $\phi\in C_b(X\times Y)$,
	\begin{equation}\label{prel:equality:disintegration}
	\int_{X\times Y}\phi(x,y)\alpha(d(x,y))=\int_X\int_Y\phi(x,y)\alpha(dy|x)m(dx).
	\end{equation} If $\alpha'(dy|x)$, $\alpha''(dy|x)$ both satisfy (\ref{prel:equality:disintegration}), then $\alpha'(\cdot|x)=\alpha''(\cdot|x)$ for $m$-a.e. $x\in X$. Conversely, given a weakly measurable family of probabilities $\alpha(dy|x)$, (\ref{prel:equality:disintegration}) defines the unique measure $\alpha\in \Lambda(X,m,Y)$. Thus, we will identify a measure $\alpha\in \Lambda(X,m,Y)$ with the class of equivalence containing families of probabilities $\alpha(dy|x)$ satisfying (\ref{prel:equality:disintegration}). 
	\item Let $m,m'\in\mathcal{P}(X)$, $\pi\in \Pi(m',m)$, $\alpha\in\Lambda(X,m,Y)$.  Define the composition of $\pi$ and $\alpha$ $\pi*\alpha\in \Lambda(X,m',Y)$ by the following rule: for $\phi\in C_b(X,Y)$, 
	\begin{equation}\label{notation:intro:composition}
	\int_{X\times Y}\phi(x',y)(\pi*\alpha)(d(x',y))\triangleq \int_{X\times X}\int_Y\phi(x',y)\alpha(dy|x)\pi(d(x',x)). 
	\end{equation} Notice that if $m,m'$ and $\alpha$ have the finite second moments, then $\pi*\alpha$ has also a finite second moment.
	\item We assume that the state space for each agent is the $d$-dimensional torus $\td\triangleq\rd/\mathbb{Z}^d$. Elements of $\td$ are sets $x=\{x'\}\subset \rd$ such that if $x',x''\in x$, then $x'-x''\in \mathbb{Z}^d$. 
	
	\item The distance on $\td$ is introduced as follows: for $x,y\in\td$, set
	$$\|x-y\|\triangleq \inf\{\|x'-y'\|:x'\in x,y'\in y\}. $$
	\item If $x\in\td$, $v\in\rd$, then $x+v$ is the set $\{x'+v:x'\in x\}$.
	\item $\mathbb{B}_c$ denotes  the ball in $\rd$ of the radius $c$  centered at the origin.
	\item For $s,r\in \mathbb{R}$, $s<r$, $\mathcal{C}_{s,r}$ stands for the set of continuous functions $x(\cdot)$ from $[s,r]$ to $\td$.
	\item If $x(\cdot)\in \mathcal{C}_{s,r}$, $t\in [s,r]$, then $e_t(x(\cdot))\triangleq x(t)$. With some abuse of notation, we denote the distance between $x(\cdot),y(\cdot)\in \mathcal{C}_{s,r}$ by $\|x(\cdot)-y(\cdot)\|$. Recall that 
	$$ \|x(\cdot)-y(\cdot)\|\triangleq \max_{t\in [s,r]}\|x(t)-y(t)\|.$$
	
\end{itemize}

\subsection{Mean field type differential game}
We consider the mean field type differential game with the dynamics of each agent given by
\begin{equation}\begin{split}
\frac{d}{dt}x(t)=f(&t,x(t),m(t),u(t),v(t)),\\ &t\in [0,T],\ \ m(t)\in\mathcal{P}^2(\td),\ \ u(t)\in U,\ \ v(t)\in V.\end{split}
\end{equation} Here $m(t)$ is the distribution of all agents at time $t$; $u(t)$ (respectively, $v(t)$) is the control of the first (respectively, second) player acting on the agent; $U$ (respectively, $V$) is the control space for the first (respectively, second) player.

We assume that the players influence upon the dynamics of each player independently. The purpose of the first (respectively, second) player is to minimize (respectively, maximize) the functional
$$g(m(T)). $$

We impose the following condition on the dynamics and the payoff function:
\begin{enumerate}
	\item the sets $U$ and $V$ are metric compacts;
	\item the functions $f$ and $g$ are continuous;
	\item the function $f$ is Lipschitz continuous w.r.t. $x$ and $m$ i.e. there exists $L>0$ such that, for any $t\in [0,T]$, $x',x''\in\td$, $m',m''\in\mathcal{P}(\td)$, $u\in U$, $v\in V$,
	$$\|f(t,x',m',u,v)-f(t,x',m',u,v)\|\leq L(\|x'-x''\|+W_2(m',m'')); $$
	\item (Isaacs' condition) for any $t\in [0,T]$, $x\in\td$, $m\in\ptd$, $u\in U$, $v\in V$ and $w\in\rd$,
	$$\min_{u\in U}\max_{v\in V}\langle w,f(t,x,m,u,v)\rangle=\max_{v\in V}\min_{u\in U}\langle w,f(t,x,m,u,v)\rangle. $$ 
\end{enumerate}

Now, let us describe the dynamics of the representative agent produced by the players' controls. Denote by $\mathcal{U}^0$ the set of measurable functions defined on $[0,T]$ with values in $U$. Further, set $\mathcal{U}\triangleq \Lambda([0,T],\lambda,U)$, where $\lambda$ stands for the Lebesgue measure. Below we regard $U$, $\mathcal{U}^0$ and $\mathcal{U}$ as the sets of constant, measurable and relaxed controls of the first players respectively. Using the embedding provided by the Dirac measures one can assume that
$$U\subset\mathcal{U}^0\subset\mathcal{U}. $$

Analogously, we define the set of measurable controls of the second player $\mathcal{V}^0\triangleq \{v(\cdot):[0,T]\rightarrow V\text{ measurable}\}$ and the set of relaxed control of the second player $\mathcal{V}\triangleq \Lambda([0,T],\lambda,V)$. As above, we have that
$$V\subset \mathcal{V}^0\subset\mathcal{V}. $$

Now, assume that $s,r\in [0,T]$, $y\in\td$ is an initial position, $m(\cdot):[s,r]\rightarrow\mathcal{P}^2(\td)$ is a given flow of probabilities, $\xi\in\mathcal{U}$, $\zeta\in\mathcal{V}$ are relaxed controls of the first and second player respectively. The corresponding motion of representative agent is a function defined on $[s,r]$ with values in $\td$ solving the following initial value problem:
\begin{equation}\label{motion:eq:agent}
\frac{d}{dt}x(t)=\int_U\int_V f(t,x(t),m(t),u,v)\xi(du|t)\zeta(dv|t),\ \ x(s)=y.
\end{equation} Below we denote the solution of (\ref{motion:eq:agent}) by $x(\cdot,s,y,m(\cdot),\xi,\zeta)$. Furthermore, let $\mathrm{traj}^{s,r}_{m(\cdot)}$ stand for the operator which assigns to the triple $(y,\xi,\zeta)$ the  trajectory $x(\cdot,s,y,m(\cdot),\xi,\zeta)$.

Now, let us turn to the dynamics of the whole system. We start with the analogs of the open-loop controls. It is natural to assume that at each point $x$ the player can share his/her control. Thus, we consider the distributions of controls. If $m_*\in\mathcal{P}^2(\td)$ is an initial distribution of agents, let \begin{itemize}
	\item $\mathcal{A}^c[m_*]\triangleq \Lambda(\td,m_*,U)$ be the set of distributions of constant controls of the first player;
	\item $\mathcal{A}^0[m_*]\triangleq\Lambda(\td,m_*,\mathcal{U}^0)$ denote the set of distributions of measurable controls of the first player;
	\item $\mathcal{A}[m_*]\triangleq\Lambda(\td,m_*,\mathcal{U})$ be the set of distributions of relaxed controls of the first player.
\end{itemize} Without loss of generality, we assume that
$$\mathcal{A}^c\subset\mathcal{A}^0\subset\mathcal{A}. $$

Analogously, $\mathcal{B}^c[m_*]\triangleq\Lambda(\td,m_*,V)$, $\mathcal{B}^0[m_*]\triangleq\Lambda(\td,m_*,\mathcal{V}^0)$, $\mathcal{B}[m_*]\triangleq\Lambda(\td,m_*,\mathcal{V})$ are the sets of distributions of constant, measurable and relaxed controls of the second player respectively. As above, we have that
$$\mathcal{B}^c\subset\mathcal{B}^0\subset\mathcal{B}. $$

Set $\mathcal{D}[m_*]\triangleq \Lambda(\td,m_*,\mathcal{U}\times\mathcal{V})$. In particular, this means that $\mathcal{D}[m_*]\subset\mathcal{P}(\td\times \mathcal{U}\times\mathcal{V})$ and each element of  $\mathcal{D}[m_*]$ is a distribution of pairs of controls. Finally, let us introduce the  consistent distributions of controls. If $\alpha\in\mathcal{A}[m_*]$, then let $\mathcal{D}_1[\alpha]$ be the set of probabilities $\varkappa\in\mathcal{D}[m_*]$ such that its marginal distribution on $\td\times \mathcal{U}$ is equal to $\alpha$ i.e. $\mathrm{p}^{1,2}_\#\varkappa=\alpha$. Analogously, for $\beta\in\mathcal{B}[m_*]$, we denote by $\mathcal{D}_2[\beta]$ the set of probabilities $\varkappa\in\mathcal{D}[m_*]$ such that   $\mathrm{p}^{1,3}_\#\varkappa=\beta$.

In  the following, we call any function of time taking values in the space of probabilities a flow of probabilities.

\begin{definition}\label{def:motion} Let $s,r\in [0,T]$, $s<r$, $m_*\in\mathcal{P}^2(\td)$, $\varkappa\in\mathcal{D}[m_*]$ we say that the flow of probabilities $[s,r]\ni t\mapsto m(t)\in\mathcal{P}(\td)$ is produced by $s$, $m_*$ and distribution of pairs of controls $\varkappa$ if there exists $\chi\in\mathcal{P}_2(\mathcal{C}_{s,r})$ such that
	\begin{enumerate}[label=(\roman*)]
		\item $m(t)=e_t{}_\#\chi$, $m(s)=m_*$;
		\item $\chi=\mathrm{traj}^{s,r}_{m(\cdot)}{}_\#\varkappa$.
	\end{enumerate}
\end{definition} Below we denote the flow of probabilities produced by $s$, $m_*$ and $\varkappa$ by $m(\cdot,s,m_*,\varkappa)$.

\subsection{Feedback strategies and value function}

\begin{definition}\label{def:strategy}
	A feedback strategy of the first player is a function $\mathfrak{u}:[0,T]\times\ptd\rightarrow\mathcal{P}(\td\times U)$ satisfying the condition $$\mathfrak{u}[s,m]\in\mathcal{A}^c[m]$$ for each $s\in [0,T]$ and $m\in\ptd$. Analogously, we call any function $\mathfrak{v}:[0,T]\times\mathcal{P}(\td)\rightarrow\mathcal{P}(\td\times V)$ such that, for any $s\in [0,T]$, $m\in\mathcal{P}(\td)$, $\mathfrak{v}[s,m]\in\mathcal{D}^c[m_*]$ a feedback strategy of the second player.
\end{definition}

We assume that the players form their controls stepwise. If $t_0\in [0,T]$ is an initial time, $m_0\in\ptd$ is an initial distribution of agents, $\mathfrak{u}$ is a strategy of the first player, $\Delta=\{t_k\}_{k=0}^N$ is a partition of $[t_0,T]$, then we say that the flow of probabilities $[t_0,T]\ni t\mapsto m(t)$ is produced by $t_0$, $m_0$, $\mathfrak{u}$ and $\Delta$ if there exist  sequences of probabilities $\{\alpha_k\}\subset\mathcal{P}(\td\times U)$ $\{\varkappa_k\}_{k=0}^N\subset\mathcal{P}(\td\times U\times \mathcal{V})$ such that
\begin{enumerate}
	\item $\alpha_k=\mathfrak{u}[t_k,m_k]$, where $m_k\triangleq m(t_k)$;
	\item $\varkappa_k\in\mathcal{D}_1[\alpha_k]$;
	\item for $k=0,\ldots,N-1$, $t\in [t_{k},t_{k+1}]$, $$m(t)=m(t,t_k,m_k,\varkappa_k). $$
\end{enumerate} Notice that $t_k$ is the time of control correction.

We denote the set of flows of probabilities produced by $t_0$, $m_0$, $\mathfrak{u}$ and $\Delta$ by $\mathcal{X}_1(t_0,m_0,\mathfrak{u},\Delta)$. 

Given an initial position $(t_0,m_0)\in[0,T]\times\ptd$, a second player's control $\mathfrak{v}$ and a sequence of times of control correction $\Delta=\{t_k\}_{k=0}^N$, one can define the corresponding set of flows of probabilities in the similar way. We denote it by $\mathcal{X}_2(t_0,m_0,\mathfrak{v},\Delta)$. 

The the players' outcome can be estimated as follows:
$$\Gamma_1(t_0,m_0)\triangleq \inf_{\mathfrak{u},\Delta}\sup_{m(\cdot)\in\mathcal{X}_1(t_0,m_0,\mathfrak{u},\Delta)}g(m(T));$$
$$\Gamma_2(t_0,m_0)\triangleq \sup_{\mathfrak{v},\Delta}\inf_{m(\cdot)\in\mathcal{X}_2(t_0,m_0,\mathfrak{v},\Delta)}g(m(T)).$$ The function $\Gamma_1$ (respectively, $\Gamma_2$) is the upper (respectively, lower) value of the game. Under the imposed condition it is proved (see~\cite{Averbokh_mfdg}) that there exists a value function of the game $\Gamma=\Gamma_1=\Gamma_2$. To characterize the value function we need the notions of $u$- and $v$-stability. 

\begin{definition}\label{def:u_stable} We say that a lower semicontinuous function $\psi_1:[0,T]\times\ptd\rightarrow\mathbb{R}$ is $u$-stable if
	\begin{itemize}
		\item for any $m\in\ptd$, $g(m)\geq \psi_1(T,m)$;
		\item for any $s,r\in[0,T]$, $s\leq r$, $m_*\in\ptd$, $\beta\in\mathcal{B}^c[m_*]$, there exists $\varkappa\in \mathcal{D}_2[\beta]$ such that 
		$$\psi_1(s,m_*)\geq \psi_1(r,m(r,s,m_*,\varkappa)).$$
	\end{itemize}
\end{definition}

\begin{definition}\label{def:v_stable} An upper semicontinuous function $\psi_2:[0,T]\times\ptd\rightarrow\mathbb{R}$ is said to be  $v$-stable if
	\begin{itemize}
		\item for any $m\in\ptd$, $g(m)\leq \psi_2(T,m)$;
		\item for any $s,r\in[0,T]$, $s\leq r$, $m_*\in\ptd$, $\alpha\in\mathcal{A}^c[m_*]$, there exists $\varkappa\in \mathcal{D}_1[\alpha]$ such that 
		$$\psi_2(s,m_*)\leq \psi_2(r,m(r,s,m_*,\varkappa)).$$
	\end{itemize}
\end{definition}

\begin{theorem}[Theorems 1, 2~\cite{Averbokh_mfdg}]\label{th:u_v_stability_mfdg}
	If $\psi_1$ is $u$-stable, then $\Gamma_1\leq \psi_1$. Analogously, if $\psi_2$ is $v$-stable, then $\Gamma_2\geq \psi_2$.
	
	The value function exists and it is simultaneously $u$- and $v$-stable.
\end{theorem}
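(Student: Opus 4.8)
The plan is to prove the two assertions separately by transporting the Krasovskii--Subbotin stability technique to the Wasserstein state space, with the Isaacs condition as the pivotal tool. For the implication ``$\psi_1$ $u$-stable $\Rightarrow\Gamma_1\le\psi_1$'', I would fix an initial position $(t_0,m_0)$ and $\varepsilon>0$ and build a near-optimal feedback strategy $\mathfrak{u}$ of the first player together with a partition $\Delta$ of sufficiently small mesh so that every flow $m(\cdot)\in\mathcal{X}_1(t_0,m_0,\mathfrak{u},\Delta)$ ends with $g(m(T))\le\psi_1(t_0,m_0)+\varepsilon$; letting $\varepsilon\to0$ then gives $\Gamma_1\le\psi_1$. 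The construction runs a \emph{guide}: alongside the true flow I keep a model flow $\hat m(\cdot)$ started at $\hat m(t_0)=m_0$ along which $\psi_1$ is nonincreasing. On each step $[t_k,t_{k+1}]$ the guide is advanced by the plan $\varkappa$ supplied by Definition~\ref{def:u_stable}, applied to the (frozen) second-player control read off from the realized $\varkappa_k\in\mathcal{D}_1[\mathfrak{u}[t_k,m_k]]$, which is exactly what yields $\psi_1(t_{k+1},\hat m_{k+1})\le\psi_1(t_k,\hat m_k)$. The actual control $\mathfrak{u}[t_k,m_k]\in\mathcal{A}^c[m_k]$ is chosen by extremal shift, aiming the true distribution at the guide along an optimal plan in $\Pi^0(m_k,\hat m_k)$; here the Isaacs condition is what makes the aiming succeed against the adversary's choice of the second-player control inside $\mathcal{D}_1[\mathfrak{u}[t_k,m_k]]$.

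The quantitative heart of this step is a discrete Gronwall estimate in the $2$-Wasserstein metric. Using the Lipschitz continuity of $f$ in $x$ and $m$ together with the composition $\pi*\alpha$ of transport plans, I expect to obtain, for $\delta_k\triangleq t_{k+1}-t_k$, an inequality of the form
\[
W_2^2(m_{k+1},\hat m_{k+1})\le (1+C\delta_k)\,W_2^2(m_k,\hat m_k)+\delta_k\,\omega(\Delta),
\]
where $C$ depends only on $L$ and $T$ and $\omega(\Delta)\to0$ as the mesh of $\Delta$ tends to $0$ (the error term also absorbs the freezing of the adversary's relaxed control on each short step). Since $W_2(m_0,\hat m_0)=0$, summation yields $W_2(m(T),\hat m(T))\to0$ uniformly over the admissible flows. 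The terminal condition of Definition~\ref{def:u_stable} together with the monotonicity of $\psi_1$ along the guide bounds $g(\hat m(T))$ by $\psi_1(t_0,m_0)$, and the continuity of $g$ transfers this bound to $g(m(T))$ up to $\varepsilon$. The dual implication ``$\psi_2$ $v$-stable $\Rightarrow\Gamma_2\ge\psi_2$'' is proved symmetrically, interchanging the players, the sets $\mathcal{A}^c$ and $\mathcal{B}^c$, the consistency classes $\mathcal{D}_1$ and $\mathcal{D}_2$, and the directions of all inequalities.

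For the existence of the value I would first record the general inequality $\Gamma_2\le\Gamma_1$ between the lower and upper values. It then suffices to produce a single function $w$ that is simultaneously $u$- and $v$-stable: by the two implications just proved one has $\Gamma_1\le w\le\Gamma_2$, whence $\Gamma_1=\Gamma_2=w$ and the common value is, by construction, both $u$- and $v$-stable. I would obtain such a $w$ as the limit of the values of the stepwise (discretized) games associated with a vanishing sequence of partitions, and verify both stability properties for $w$ by a local, one-step analysis in which the Isaacs condition is used to interchange $\min_{u}$ and $\max_{v}$ and thereby reconcile the ``first responds to a frozen second control'' viewpoint of $u$-stability with the ``second responds to a frozen first control'' viewpoint of $v$-stability.

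The main obstacle is twofold. In the first part it is the extremal-shift estimate: making the aiming rule precise in $\ptd$ and proving the one-step Wasserstein inequality above requires careful bookkeeping of optimal plans, disintegrations and compositions, rather than the elementary scalar-product estimate available in $\rd$. In the second part it is the construction of the both-stable function $w$ and the verification that both stability properties survive the passage to the limit through the discretization; this is precisely where the Isaacs condition is indispensable, since without it the two one-sided constructions need not meet.
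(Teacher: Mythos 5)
You should first note that this paper does not actually prove Theorem~\ref{th:u_v_stability_mfdg}: as the attribution in its header and Remark~\ref{remark:strategy} make explicit, the statement is imported from~\cite{Averbokh_mfdg}, where it is established by an extremal-shift (Krasovskii--Subbotin) construction with a guide in the Wasserstein space. So your attempt can only be measured against that cited argument, and in outline you have reproduced it correctly: a guide flow along which the stable function is monotone, aiming of the true flow at the guide along optimal plans, a discrete Gronwall estimate for $W_2^2$, and the Isaacs condition to close the one-step inequality.

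However, the step at the heart of your construction is wrong. You advance the guide by applying Definition~\ref{def:u_stable} to ``the (frozen) second-player control read off from the realized $\varkappa_k$''. First, this violates the information structure: a feedback strategy (Definition~\ref{def:strategy}), and any guide it maintains, may depend only on the observed positions $(t_k,m_k)$; the opponent's realized relaxed control is neither observed nor reconstructible from the flow, and in any case it is a distribution over $\mathcal{V}$ attached to $m_k$, whereas $u$-stability must be invoked with an element of $\mathcal{B}^c[\hat m_k]$ attached to the guide. Second, and fatally, the one-step estimate fails with that choice: the stability condition gives you \emph{no} control over which response $\varkappa\in\mathcal{D}_2[\beta]$ it returns, only monotonicity of $\psi_1$; to get $\langle s,\dot x-\dot{\hat x}\rangle\le C\|s\|^2$ (which is what your inequality $W_2^2(m_{k+1},\hat m_{k+1})\le(1+C\delta_k)W_2^2(m_k,\hat m_k)+\delta_k\omega(\Delta)$ requires) one needs the lower bound $\langle s,\dot{\hat x}\rangle\ge\max_v\min_u\langle s,f(t,\hat x,\hat m,u,v)\rangle$, and an arbitrary stability response satisfies this only when the frozen control fed into the definition is the max--min maximizer $v^*$ for the shift direction. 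If you feed in the realized control $\bar v$ instead, the discrepancy $\max_v\min_u\langle s,f\rangle-\min_u\langle s,f(\cdot,\bar v)\rangle$ is of order $\|s\|$, not $\|s\|^2$ plus a vanishing term, so the Gronwall argument collapses and $W_2(m(T),\hat m(T))$ need not be small. A one-dimensional example makes this concrete: $f=uv$ with $u,v\in[-1,1]$ satisfies Isaacs with value $0$; the extremal real control is $u_k=0$, so the true state never moves, yet the stability response to a frozen realized $\bar v=1$ may move the guide at unit speed, since $\mathrm{co}\{u\bar v:u\in U\}=[-1,1]$ and the response is dictated by the geometry of $\psi_1$, not by your tracking needs. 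The correct construction chooses $v^*$ pointwise on the guide via the optimal plan in $\Pi^0(m_k,\hat m_k)$, feeds $\beta^*=$ the law of $v^*$ into $u$-stability, and handles the realized opponent control only through $\langle s,f(t,x,m,u_k,v)\rangle\le\min_u\max_v\langle s,f\rangle=\max_v\min_u\langle s,f\rangle$; that is where Isaacs actually enters, not where you placed it. Two further remarks: your terminal chain needs the boundary inequality of Definition~\ref{def:u_stable} in the form $g(m)\le\psi_1(T,m)$ (the opposite of what is printed there, which appears to be a misprint in the paper, but you should say so rather than use it silently), and your construction of a simultaneously $u$- and $v$-stable function in the existence part is a plan, not a proof: the compactness, convergence and preservation of both stability properties under the limit of discretized games are exactly the hard content there and are not addressed.
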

\begin{remark}\label{remark:strategy} Given a $u$-stable (respectively, $v$-stable) function one can construct the suboptimal strategy of the first (respectively, second) player. This is based on the variant of the extremal shift rule for the mean field type differential games (see~\cite{Averbokh_mfdg} for details).
\end{remark}

\section{Main result}\label{sect:main_result} In this section we extend the notion of directional derivatives to functions of probability and formulate the infinitesimal variants of   $u$- and $v$-stability conditions using this notion.

For $\tau>0$, let $\Theta^\tau:\tdu\times\rd\rightarrow\td$ be defined by the rule:
$$\Theta^\tau(x,u,w)\triangleq x+\tau w. $$ With some abuse of notation, we also denote by $\Theta^\tau$ the operator from $\tdv\times\rd$ to $\td$ acting by the rule:
$$\Theta^\tau(x,v,w)\triangleq x+\tau w. $$

\begin{definition}\label{def:directional_derivative}
	Let $\psi:[0,T]\times\ptd\rightarrow\mathbb{R}$, $s\in [0,T]$, $c>0$, $\eta\in\mathcal{P}^2(\tdv\times\rd), $ $\beta\triangleq \mathrm{p}^{1,2}{}_\#\eta$, $m\triangleq \mathrm{p}^1{}_\#\beta=\mathrm{p}^1{}_\#\eta$. The value
	$$\ud_c\psi(s;\eta)\triangleq \liminf_{\substack{\eta'\in\ptdvbc,\ \ \mathrm{p}^{1,2}{}_\#\eta'=\beta\\ \tau\downarrow 0,\ \ W_2(\eta',\eta)\downarrow 0 }}\frac{\psi(s+\tau,\Theta^{\tau}{}_\#\eta')-\psi(s,m)}{\tau}$$ is called a $u$-derivative of the function $\psi$ at $s$ and $\eta$ for the radius $c$. 
	
	Analogously, if $\eta\in \mathcal{P}^2(\tdu\times\rd)$, $\alpha\triangleq\mathrm{p}^{1,2}{}_\#\eta$, $m\triangleq\mathrm{p}^1{}_\#\alpha=\mathrm{p}^1{}_\#\eta$, $c>0$, the number
	$$\vd_c\psi(s;\eta)\triangleq \limsup_{\substack{\eta'\in\ptdubc,\ \ \mathrm{p}^{1,2}_\#\eta'=\alpha\\ \tau\downarrow 0,\ \ W_2(\eta',\eta)\downarrow 0 }}\frac{\psi(s+\tau,\Theta^{\tau}{}_\#\eta')-\psi(s,m)}{\tau}$$ is  a $v$-derivative of the function $\psi$ at $s$ and $\eta$  for the radius $c$.
\end{definition}

Notice that one can consider $u$-derivative  as a lower directional derivative of the extension of the function $\psi$ to the space $[0,T]\times\mathcal{P}^2(\tdv)$. Analogously, $v$-derivative can be regarded to be an upper directional derivative of lifting of $\psi$ to $[0,T]\times\mathcal{P}^2(\tdv)$. However, since there is no natural ways to define the set of tangent distribution to $\mathcal{P}^2(\tdu)$ (respectively, $\mathcal{P}^2(\tdv)$) we use only shifts on $\td$ and introduce the special notions.

Now, we define analogs of the vectograms. 
First, for  $s\in [0,T]$, $x\in\td$, $m\in\ptd$, $u\in U$, $v\in V$, put $$F_1(s,x,m,u)\triangleq \mathrm{co}\{f(t,x,m,u,v):v\in V\}, $$
$$F_2(s,x,m,v)\triangleq \mathrm{co}\{f(s,x,m,u,v):u\in U\}. $$ The graphs of   $F_1$ and $F_2$ are introduced as follows. Set
$$G_1(s,m)\triangleq \{(x,u,w):x\in\td,u\in U,w\in F_1(s,x,m,u)\}, $$
$$G_2(s,m)\triangleq \{(x,v,w):x\in\td,v\in V,w\in F_2(s,x,m,v)\}. $$ Now, let $s\in[0,T]$,
$\alpha\in \mathcal{P}(\tdu)$, $m\triangleq \mathrm{p}^1{}_\#\alpha$. Put $$\mathcal{F}_1(s,\alpha)\triangleq\{\eta:\eta\in\mathcal{P}^2(G_1(s,m)),\ \ \mathrm{p}^{1,2}{}_\#\eta=\alpha \}.$$ If $\beta\in\mathcal{P}(\tdv)$, $m\triangleq \mathrm{p}^1{}_\#\beta$, then set
$$\mathcal{F}_2(s,\beta)\triangleq\{\eta:\eta\in\mathcal{P}^2(G_2(s,m)),\ \ \mathrm{p}^{1,2}{}_\#\eta=\beta \}.$$

The set $\mathcal{F}_1(s,\alpha)$ (respectively, $\mathcal{F}_2(s,\beta)$) plays the role the vectogram for the given distribution of the constant controls of the first (respectively, second) player.

\begin{theorem}\label{th:characterisation} A lower semicontinuous function $\psi_1:[0,T]\rightarrow\ptd$ is $u$-stable if and only if
	\begin{itemize}
		\item 
		for any $m\in\ptd$, $g(m)\geq \psi_1(T,m)$;
		\item there exists $c>0$ such that, for any $s\in [0,T]$,  $\beta\in\mathcal{P}(\tdv)$,
		$$\inf\{\ud_c\psi_1(s,\eta):\eta\in\mathcal{F}_2(s,\beta)\}\leq 0. $$
	\end{itemize}	
	A upper semicontinuous function $\psi_2:[0,T]\rightarrow\ptd$ is $v$-stable if and only if
	\begin{itemize}
		\item  for any $m\in\ptd$, $g(m)\leq \psi_2(T,m)$;
		\item for any $s\in [0,T]$, $\alpha\in\mathcal{P}(\tdu)$,
		$$\sup\{\vd_c\psi_2(s,\eta):\eta\in\mathcal{F}_1(s,\alpha)\}\geq 0, $$ where $c$ is constant independent  of $s$ and $\alpha$.
	\end{itemize}
\end{theorem}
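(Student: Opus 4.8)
The plan is to prove each of the two equivalences (the one for $u$-stability and the one for $v$-stability) by treating the two implications separately; the $v$-stability statement follows from the $u$-stability one by the symmetric argument, exchanging the roles of the two players, of $\inf$ and $\sup$, of lower and upper semicontinuity, and of $\liminf$ and $\limsup$. I would start with the easier implication, \emph{necessity} of the infinitesimal condition (the content of Section~\ref{sect:proof:necessity}). Assuming $\psi_1$ is $u$-stable, fix $s$ and $\beta\in\mathcal{P}(\td\times V)$ and set $m_*\triangleq\mathrm{p}^1{}_\#\beta$, so that $\beta\in\mathcal{B}^c[m_*]$. Applying Definition~\ref{def:u_stable} on the interval $[s,s+\tau]$ yields $\varkappa_\tau\in\mathcal{D}_2[\beta]$ with $\psi_1(s,m_*)\geq\psi_1(s+\tau,m(s+\tau,s,m_*,\varkappa_\tau))$. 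Since $f$ is continuous on the product of the compacts $\td$, $\mathcal{P}^2(\td)$, $U$, $V$, it is bounded by a constant $c$; hence the averaged velocity $w_\tau\triangleq\tau^{-1}(x(s+\tau)-x(s))$ of each agent lies in $\mathbb{B}_c$, and the joint law $\eta'_\tau$ of $(x(s),v,w_\tau)$ lies in $\mathcal{P}^2(\td\times V\times\mathbb{B}_c)$ with $\mathrm{p}^{1,2}{}_\#\eta'_\tau=\beta$ and $\Theta^{\tau}{}_\#\eta'_\tau=m(s+\tau,s,m_*,\varkappa_\tau)$.

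By compactness of $\td\times V\times\mathbb{B}_c$ one may extract $\eta'_{\tau_n}\to\eta$; the differential-inclusion description of the flows established in Section~\ref{sect:equivalence}, combined with the continuity and convexity of $F_2$, shows that the limiting direction is supported on $G_2(s,m_*)$, i.e. $\eta\in\mathcal{F}_2(s,\beta)$. Along this sequence the difference quotient defining the $u$-derivative is nonpositive, and since $\ud_c\psi_1(s;\eta)$ is a $\liminf$ over \emph{all} $(\eta',\tau)\to(\eta,0)$ with $\mathrm{p}^{1,2}{}_\#\eta'=\beta$, we conclude $\ud_c\psi_1(s;\eta)\leq 0$, whence $\inf\{\ud_c\psi_1(s;\eta):\eta\in\mathcal{F}_2(s,\beta)\}\leq 0$. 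As $c$ depends only on the bound on $f$, it is independent of $s$ and $\beta$, as required.

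The harder implication is \emph{sufficiency} (Section~\ref{sect:proof:sufficiency}), which is essentially a viability theorem: the infinitesimal inequality is the tangency condition for viability of the epigraph of $\psi_1$ (closed, by lower semicontinuity) under the controlled dynamics, and I would prove it by an adaptive Euler scheme. Fixing $s<r$, $m_*$ and $\beta$, at a current node $(t_k,m_k)$ I would use the hypothesis to choose $\eta_k\in\mathcal{F}_2(t_k,\beta_k)$ with $\ud_c\psi_1(t_k;\eta_k)\leq\varepsilon$; the definition of the $\liminf$ then supplies $\tau_k>0$ and $\eta'_k$ near $\eta_k$ (with $\mathrm{p}^{1,2}{}_\#\eta'_k=\beta_k$) such that $\psi_1(t_k+\tau_k,\Theta^{\tau_k}{}_\#\eta'_k)\leq\psi_1(t_k,m_k)+\varepsilon\tau_k$. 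Using the dictionary of Section~\ref{sect:equivalence}, I would realize $\eta'_k$, up to $o(\tau_k)$ in $W_2$, by a consistent distribution $\varkappa_k\in\mathcal{D}_2[\beta_k]$, take the shift-based node $m_{k+1}\triangleq\Theta^{\tau_k}{}_\#\eta'_k$, and propagate the distribution of constant controls. A maximality (Zorn-type) argument over partitions, which keeps control of the step sizes produced by the $\liminf$, guarantees that the scheme exhausts $[s,r]$ and yields a concatenated control $\varkappa^N\in\mathcal{D}_2[\beta]$ together with nodes satisfying $\psi_1(r,m_N)\leq\psi_1(s,m_*)+\varepsilon(r-s)$.

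Finally I would let the mesh and $\varepsilon$ tend to $0$. The uniform velocity bound $c$ and the shift-operator estimates of Section~\ref{sect:shift_operator} make the approximate flows equi-Lipschitz in $W_2$ and the associated measures on trajectory space tight, so along a subsequence they converge to a genuine flow which, by Section~\ref{sect:equivalence}, coincides with $m(\cdot,s,m_*,\varkappa)$ for some $\varkappa\in\mathcal{D}_2[\beta]$; moreover the shift-based nodes and the realized flow share this limit. Lower semicontinuity of $\psi_1$ then gives $\psi_1(r,m(r,s,m_*,\varkappa))\leq\liminf_N\psi_1(r,m_N)\leq\psi_1(s,m_*)$, which is exactly the $u$-stability inequality; the terminal condition is common to both formulations. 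I expect the sufficiency step to be the main obstacle, and within it two delicate points: first, controlling uniformly along the refinement the discrepancy between the shift $\Theta^{\tau}{}_\#\eta'$ used to define $\ud_c$ and the actual controlled flow $m(\cdot,s,m_*,\varkappa)$, which is precisely what the equivalence of Section~\ref{sect:equivalence} and the estimates of Section~\ref{sect:shift_operator} are designed to supply; and second, ensuring that the adaptive partition, whose steps are only guaranteed positive by the $\liminf$, reaches $r$, which needs a careful maximality argument rather than a prescribed mesh.
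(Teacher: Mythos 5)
Your plan follows the paper's architecture almost everywhere, with one genuine divergence in the sufficiency half. The necessity argument is exactly that of Section~\ref{sect:proof:necessity}: apply stability on $[s,s+\tau]$, push the trajectory measure through the averaged-velocity map (the paper's operator $\Delta^{s,r}$), use compactness of $\td\times V\times\mathbb{B}_c$ to extract a $W_2$-limit, identify the limit as an element of $\mathcal{F}_2(s,\beta)$ via the dictionary of Section~\ref{sect:equivalence}, and observe that one admissible sequence with nonpositive difference quotients forces the $\liminf$ defining $\ud_c\psi_1$ to be nonpositive. Your sufficiency argument also has the paper's skeleton (Euler polygons recorded as measures on trajectory space, equi-Lipschitz bounds and tightness from the radius $c$, passage to the limit through Corollary~\ref{corollary:equivalence} and Lemmas~\ref{lm:continuity_Xi},~\ref{lm:lipschtitz_F}, semicontinuity at the end), but you handle the central difficulty differently. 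The paper's Lemma~\ref{lm:one_step} covers the compact set $\mathcal{K}$ (compact because $\ptdu$ is compact and $\psi_2$ is upper semicontinuous) by the open sets $\mathcal{E}(h,\mu)$ and extracts a finite subcover, obtaining a \emph{uniform} step length $\varepsilon_n>0$, so the scheme terminates after finitely many steps by construction. You instead take steps adapted to the current node, whose sizes the $\liminf$ only guarantees to be positive, and close the gap with a Zorn-type maximality argument. That route does work: directions lie in $\mathbb{B}_c$, so the nodes are $c$-Lipschitz in $W_2$ and a stalled chain of nodes converges; lower semicontinuity of $\psi_1$ preserves the accumulated inequality at the limit time; and the hypothesis applied at the limit point extends any maximal approximate solution past it, a contradiction. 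The trade-off is instructive: your adaptive steps keep the control marginal exactly equal to the propagated $\beta_k$ (the $\liminf$ in Definition~\ref{def:directional_derivative} is constrained to $\mathrm{p}^{1,2}{}_\#\eta'=\beta_k$), so you never need the paper's double bookkeeping of $\alpha_n^j$ versus $\mu_n^j$ and the drift estimate (\ref{proof_suff:ineq:alpha_mu_j}), which exist only because Lemma~\ref{lm:one_step} returns nodes attached to the finite cover rather than to the current position; conversely, the paper's uniform $\varepsilon_n$ makes any transfinite argument unnecessary and keeps the error analysis (culminating in (\ref{proof_suf:ineq:int_F_1_final})) a purely finite sum. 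Both routes rest on the same compactness of $\ptdu$, so neither is more general; yours is closer to the classical viability proofs of Aubin, the paper's to Subbotin's construction. The items you defer --- the quantitative statement that the polygons approximately satisfy the integral inclusion, and the identification of the limit as $m(\cdot,s,m_*,\varkappa)$ with $\varkappa\in\mathcal{D}_2[\beta]$ --- are indeed supplied by Sections~\ref{sect:shift_operator} and~\ref{sect:equivalence}, so the plan is sound.
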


This and Theorem~\ref{th:u_v_stability_mfdg} immediately imply the following.

\begin{corollary}\label{corollary:value} A continuous function $\psi:[0,T]\times\ptd\rightarrow\mathbb{R}$ is a value function of the mean field type differential game if and only if, for any $m\in\ptd$, $g(m)=\psi(T,m)$ and one can find a constant $c>0$ satisfying the following condition: for each $s\in [0,T]$, $\alpha\in \mathcal{P}(\tdu)$, $\beta\in\mathcal{P}(\tdv)$,
	\begin{itemize}
		\item $\inf\{\ud_c\psi(s,\eta):\eta\in\mathcal{F}_2(s,\beta)\}\leq 0; $
		\item $\sup\{\vd_c\psi(s,\eta):\eta\in\mathcal{F}_1(s,\alpha)\}\geq 0.$
	\end{itemize}
\end{corollary}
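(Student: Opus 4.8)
The plan is to derive Corollary~\ref{corollary:value} as a direct consequence of Theorem~\ref{th:characterisation} together with Theorem~\ref{th:u_v_stability_mfdg}, using the fact that a continuous $\psi$ is simultaneously lower and upper semicontinuous and therefore admissible for both halves of the characterization theorem. The only genuinely technical point is that the corollary asks for a \emph{single} radius $c$ serving both the $u$- and the $v$-derivative inequality, whereas Theorem~\ref{th:characterisation} produces the two radii independently; I would reconcile this with a short monotonicity observation.

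First I would prove necessity. Assuming $\psi=\Gamma=\Gamma_1=\Gamma_2$, Theorem~\ref{th:u_v_stability_mfdg} tells us that $\psi$ is at once $u$-stable and $v$-stable. The terminal part of $u$-stability gives $g(m)\geq\psi(T,m)$ and that of $v$-stability gives $g(m)\leq\psi(T,m)$, so $g(m)=\psi(T,m)$ for every $m\in\ptd$. Since $\psi$ is continuous it is lower semicontinuous, so the $u$-part of Theorem~\ref{th:characterisation} produces a radius $c_1>0$ with $\inf\{\ud_{c_1}\psi(s,\eta):\eta\in\mathcal{F}_2(s,\beta)\}\leq 0$ for all $s,\beta$; being also upper semicontinuous, the $v$-part produces a radius $c_2>0$ with $\sup\{\vd_{c_2}\psi(s,\eta):\eta\in\mathcal{F}_1(s,\alpha)\}\geq 0$ for all $s,\alpha$.

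The radii are then merged by monotonicity in $c$, which is where the main (modest) care is needed. Enlarging the radius enlarges the admissible class of perturbations $\eta'\in\ptdvbc$ (respectively $\ptdubc$) over which the $\liminf$ (respectively $\limsup$) is formed, while the sets $\mathcal{F}_2(s,\beta)$ and $\mathcal{F}_1(s,\alpha)$ do not depend on $c$. Hence $\ud_c\psi$ is non-increasing in $c$ and $\vd_c\psi$ is non-decreasing in $c$. Setting $c=\max\{c_1,c_2\}$, the inequality $\inf\{\ud_{c}\psi(s,\eta):\eta\in\mathcal{F}_2(s,\beta)\}\leq 0$ follows from its validity at $c_1$, and $\sup\{\vd_{c}\psi(s,\eta):\eta\in\mathcal{F}_1(s,\alpha)\}\geq 0$ follows from its validity at $c_2$, so both conditions hold with one common radius.

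Finally I would prove sufficiency, which is pure bookkeeping. Given a continuous $\psi$ with $g(m)=\psi(T,m)$ and a single $c>0$ for which both derivative inequalities hold, I read off $g(m)\geq\psi(T,m)$ together with the $u$-derivative condition, so the $u$-part of Theorem~\ref{th:characterisation} makes $\psi$ $u$-stable and Theorem~\ref{th:u_v_stability_mfdg} yields $\Gamma_1\leq\psi$; symmetrically, $g(m)\leq\psi(T,m)$ with the $v$-derivative condition makes $\psi$ $v$-stable and gives $\Gamma_2\geq\psi$. Since the value function exists, $\Gamma_1=\Gamma_2=\Gamma$, whence $\Gamma\leq\psi\leq\Gamma$ and therefore $\psi=\Gamma$. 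Thus the only obstacle beyond citing the two theorems is the passage from two radii to one, handled by the monotonicity remark above.
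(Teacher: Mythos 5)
Your proposal is correct and follows essentially the same route as the paper, which states the corollary as an immediate consequence of Theorem~\ref{th:characterisation} combined with Theorem~\ref{th:u_v_stability_mfdg}. Your monotonicity observation ($\ud_c\psi$ non-increasing and $\vd_c\psi$ non-decreasing in $c$, so that $c=\max\{c_1,c_2\}$ serves both inequalities) correctly fills in the one detail the paper leaves implicit.
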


\section{Flows produced by distribution of constant controls}\label{sect:equivalence}

Below we will consider only the $v$-stability condition. The case of the $u$-stability is studied in the similar way.

First, we replace the metric on $U$. Originally, we consider on $U$ a metric $\rho_U$. Now, let $\varpi$ be a modulus of continuity for $f$. In particular, for $t\in [0,T]$, $x\in\td$, $m\in\ptd$, $u',u''\in U$, $v\in V$,
\begin{equation*}\|f(t,x,m,u',v)-f(t,x,m,u'',v)\|\leq \varpi(\rho_U(u',u'')).\end{equation*} Put
\begin{equation}\label{equivalence:intro:hat_rho}
\hat{\rho}_U(u',u'')\triangleq \varpi(\rho_U(u',u''))+\rho_U(u',u''). 
\end{equation} Obviously, $\hat{\rho}_U$ is a metric on $U$ and the function $f$ is Lipschitz continuous w.r.t. $u$ in $(U,\hat{\rho}_U)$. However, we are to prove that the set of Borel probabilities does not change when we change the metric. To this end we prove the following.
\begin{proposition}\label{prop:topology_U} The topologies produced by $\rho_U$ and $\hat{\rho}_U$ coincides.
\end{proposition}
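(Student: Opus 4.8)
The plan is to show that the identity map between $(U,\rho_U)$ and $(U,\hat{\rho}_U)$ is a homeomorphism. Since both $\rho_U$ and $\hat{\rho}_U$ are genuine metrics on the same underlying set $U$, and metric spaces are first countable, it suffices to prove that a sequence $\{u_n\}\subset U$ converges to $u$ in $\rho_U$ if and only if it converges to $u$ in $\hat{\rho}_U$; equivalently, I will check that the identity is continuous in both directions.

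The direction from $\hat{\rho}_U$ to $\rho_U$ is immediate from the structure of the new metric. Since $\varpi$ takes nonnegative values, we have the pointwise bound
$$\rho_U(u',u'')\le \varpi(\rho_U(u',u''))+\rho_U(u',u'')=\hat{\rho}_U(u',u'')$$
for all $u',u''\in U$. Thus the identity $(U,\hat{\rho}_U)\to(U,\rho_U)$ is $1$-Lipschitz, so every $\rho_U$-open set is $\hat{\rho}_U$-open; equivalently, $\hat{\rho}_U$-convergence implies $\rho_U$-convergence.

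For the reverse direction I would use the defining property of a modulus of continuity, namely that $\varpi$ is nondecreasing with $\varpi(\delta)\to 0$ as $\delta\downarrow 0$. If $\rho_U(u_n,u)\to 0$, then, since $\varpi(\delta)\to 0$ as $\delta\downarrow 0$, we obtain $\varpi(\rho_U(u_n,u))\to 0$, and adding the two vanishing terms yields $\hat{\rho}_U(u_n,u)=\varpi(\rho_U(u_n,u))+\rho_U(u_n,u)\to 0$. This closes the equivalence and hence shows that the two topologies coincide.

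The only point requiring care — and the closest thing to an obstacle — is justifying that $\varpi$ genuinely vanishes at the origin, i.e. that a modulus of continuity for $f$ with this property exists. Here I would invoke compactness: because $\td$ is compact, the space $\ptd$ is compact in the Wasserstein metric, and together with the compactness of $[0,T]$, $U$, and $V$ the whole domain of $f$ is compact. Continuity of $f$ then upgrades to uniform continuity, so one may take $\varpi(\delta)\triangleq\sup\{\|f(t,x,m,u',v)-f(t,x,m,u'',v)\|:\rho_U(u',u'')\le\delta\}$, the supremum running over all admissible $t,x,m,v$; this function is nondecreasing and satisfies $\varpi(\delta)\to 0$ as $\delta\downarrow 0$ precisely by uniform continuity. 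With this property secured, the two-sided convergence argument above is complete.
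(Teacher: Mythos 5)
Your proof is correct and takes essentially the same approach as the paper: one direction follows from the pointwise inequality $\rho_U\le\hat{\rho}_U$, the other from $\varpi(\delta)\to 0$ as $\delta\downarrow 0$, and your formulation via convergent sequences is only a cosmetic repackaging of the paper's open-ball argument. Your extra justification that a vanishing modulus of continuity actually exists (compactness of the domain of $f$, hence uniform continuity) is a correct detail that the paper leaves implicit.
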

\begin{proof}
	Assume that $E$ is open within $\rho_U$. Let $u\in E$ and let $\varepsilon>$ be such that $ \{u'\in U:\rho_U(u,u')<\varepsilon\}\subset E$. Since $\rho_U(u,u')\leq \hat{\rho}_U(u,u')$, we have that $$\{u'\in U:\hat{\rho}_U(u,u')<\varepsilon\}\subset \{u'\in U:\rho_U(u,u')<\varepsilon\}\subset E. $$ 
	
	Conversely, assume that $E$ is open within $\hat{\rho}_U$. Pick any $u\in E$. There exists $\varepsilon>0$ such that 
	$$\{u'\in U:\hat{\rho}_U(u,u')<\varepsilon\}\subset E. $$ Since $\varpi(\delta)+\delta\rightarrow 0$ as $\delta\rightarrow 0$, we have that there exists $\delta$ such that, for any $\delta'<\delta$, $\varpi(\delta')+\delta'<\varepsilon$. Using (\ref{equivalence:intro:hat_rho}), we get that if ${\rho}_U(u,u')<\delta$, then $\hat{\rho}_U(u,u')<\varepsilon$. Thus,
	$$\{u'\in U:{\rho}_U(u,u')<\delta\}\subset\{u'\in U:\hat{\rho}_U(u,u')<\varepsilon\}\subset E. $$
\end{proof}

Now, let us rewrite the $v$-stability condition using the probabilities on the set $\ctu{s}{r}$. To this end we need some additional designations. For $s,r\in [0,T]$, $t\in [s,r]$, $a=(x(\cdot),u)$, put
$$\et{t}(a)\triangleq x(t),$$
$$\hae{t}(a)\triangleq (x(t),u). $$ 

\begin{proposition}\label{prop:motion_evivalence}
	Let $s,r\in [0,T]$, $s<r$, $m_*\in\ptd$, $\alpha\in\mathcal{A}^c[m_*]$, a flow of probabilities $[s,r]\ni t\mapsto m(t)$ is a motion produced by $s$, $m_*$ and some $\varkappa\in\mathcal{D}_1[\alpha]$ iff there exists $\nu\in \pctu{s}{r})$ such that
	\begin{enumerate}
		\item $\hae{s}{}_\#\nu=\alpha$;
		\item $\et{t}{}_\#\nu=m(t)$ for every $t\in [0,T]$;
		\item for any $t',t''\in [0,T]$,
		$$\int_{\ctu{s}{r}}\mathrm{dist}\left(\et{t''}(a)-\et{t'}(a),\int_{t'}^{t''} F_1(t,\et{t}(a),m(t),\mathrm{p}^2(a))dt\right)\nu(da)=0; $$
	\end{enumerate}
\end{proposition}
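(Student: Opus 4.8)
The plan is to prove both implications by transporting measures along explicit maps: in the necessity direction along the trajectory operator $\mathrm{traj}^{s,r}_{m(\cdot)}$, and in the sufficiency direction along a map built from a measurable selection of relaxed controls. Throughout, the decisive analytic fact is the pointwise representation of the vectogram: for every $u\in U$ and every $\zeta\in\mathcal{V}$ one has $\int_V f(t,x,m,u,v)\zeta(dv|t)\in F_1(t,x,m,u)$, since $F_1(t,x,m,u)=\mathrm{co}\{f(t,x,m,u,v):v\in V\}$ is convex and the integral is an average of its generators; conversely, because $V$ is compact and $f$ continuous, every point of $F_1(t,x,m,u)$ is realized as such an average for some $\mu\in\mathcal{P}(V)$.

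For necessity ($\Rightarrow$), suppose $m(\cdot)=m(\cdot,s,m_*,\varkappa)$ for some $\varkappa\in\mathcal{D}_1[\alpha]$, with witness $\chi=\mathrm{traj}^{s,r}_{m(\cdot)}{}_\#\varkappa$. Since $\mathrm{p}^{1,2}{}_\#\varkappa=\alpha$ is supported on constant controls, $\varkappa$-a.e.\ triple has the form $(y,u,\zeta)$ with $u\in U$, and I would set $T(y,u,\zeta)\triangleq(x(\cdot,s,y,m(\cdot),u,\zeta),u)\in\ctu{s}{r}$ and define $\nu\triangleq T{}_\#\varkappa$. Property~1 is immediate from $\hae{s}\circ T=\mathrm{p}^{1,2}$; property~2 follows from $\et{t}\circ T=e_t\circ\mathrm{traj}^{s,r}_{m(\cdot)}$, so that $\et{t}{}_\#\nu=e_t{}_\#\chi=m(t)$. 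For property~3, each trajectory in the support satisfies $\dot x(t)=\int_V f(t,x(t),m(t),u,v)\zeta(dv|t)\in F_1(t,x(t),m(t),u)$ for a.e.\ $t$, whence $x(t'')-x(t')=\int_{t'}^{t''}\dot x(t)\,dt$ lies in the Aumann integral $\int_{t'}^{t''}F_1(t,x(t),m(t),u)\,dt$; thus the integrand in~3 vanishes $\nu$-a.e.\ and the integral is zero.

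For sufficiency ($\Leftarrow$), I would first upgrade condition~3 to a differential inclusion. As $f$ is bounded on its compact domain, the Aumann integrals are contained in balls of radius proportional to $t''-t'$, so $\nu$-a.e.\ curve $x(\cdot)$ is Lipschitz; applying condition~3 over a countable dense set of pairs $(t',t'')$ and passing to Lebesgue points yields, for $\nu$-a.e.\ $a=(x(\cdot),u)$, that $\dot x(t)\in F_1(t,x(t),m(t),u)$ for a.e.\ $t$. The crucial step is then to select, measurably in $a$ and $t$, a probability $\zeta_a(\cdot|t)\in\mathcal{P}(V)$ realizing $\dot x(t)=\int_V f(t,x(t),m(t),u,v)\zeta_a(dv|t)$; this is where a Filippov-type measurable selection theorem enters, the joint measurability of $(a,t)\mapsto(\dot x(t),x(t),u)$ together with continuity of $f$ providing a measurable selector. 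Defining $S(a)\triangleq(\hae{s}(a),\zeta_a)\in\tdu\times\mathcal{V}$ and $\varkappa\triangleq S{}_\#\nu$, condition~2 at $t=s$ gives $\mathrm{p}^1{}_\#\varkappa=m(s)=m_*$, while condition~1 gives $\mathrm{p}^{1,2}{}_\#\varkappa=\hae{s}{}_\#\nu=\alpha$, so $\varkappa\in\mathcal{D}_1[\alpha]$. Finally, by the choice of $\zeta_a$ the curve $x(\cdot)$ solves~(\ref{motion:eq:agent}) with data $(x(s),u,\zeta_a)$, so uniqueness (from Lipschitz continuity of $f$ in $x$) yields $\mathrm{traj}^{s,r}_{m(\cdot)}\circ S=\mathrm{p}^1$, hence $\mathrm{traj}^{s,r}_{m(\cdot)}{}_\#\varkappa=\mathrm{p}^1{}_\#\nu=:\chi$ and $e_t{}_\#\chi=\et{t}{}_\#\nu=m(t)$, i.e.\ $m(\cdot)=m(\cdot,s,m_*,\varkappa)$.

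The main obstacle is the measurable selection in the sufficiency part: verifying the joint measurability and producing a relaxed control $\zeta_a$ depending measurably on $a$, so that the pushforward $\varkappa=S{}_\#\nu$ is a genuine element of $\mathcal{D}[m_*]$. The remaining steps are routine once the pointwise convex-hull representation of $F_1$ and the uniqueness of solutions of~(\ref{motion:eq:agent}) are in hand.
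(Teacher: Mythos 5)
Your proof is correct and follows the same overall decomposition as the paper's: necessity by pushing $\varkappa$ forward along $(y,u,\zeta)\mapsto(\mathrm{traj}^{s,r}_{m(\cdot)}(y,u,\zeta),u)$ and observing that each trajectory solves the inclusion $\dot x(t)\in F_1(t,x(t),m(t),u)$; sufficiency by first upgrading condition~3 over a countable dense set of time pairs to that same differential inclusion $\nu$-a.e., and then recovering $\varkappa\in\mathcal{D}_1[\alpha]$ by a measurable selection. The only genuine divergence is how the selection is implemented, and it is worth comparing. The paper never selects relaxed controls time-point by time-point: it introduces the set-valued map $\mathbf{b}(x(\cdot),u)$ of all triples $(y,u,\zeta)\in\tdu\times\mathcal{V}$ with $y=x(s)$ and $x(\cdot)=\mathrm{traj}^{s,r}_{m(\cdot)}(y,u,\zeta)$, notes that it is nonempty on solutions of the inclusion (this is where the pointwise Filippov argument is hidden) and upper semicontinuous (by continuity of $\mathrm{traj}$), and applies the Kuratowski--Ryll-Nardzewski theorem to obtain a measurable selector $\mathbf{b}_0$, setting $\varkappa=\mathbf{b}_0{}_\#\nu$. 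You instead perform a Filippov-type selection of $\zeta_a(\cdot|t)$ jointly measurably in $(a,t)$, and must then verify that $a\mapsto\zeta_a\in\mathcal{V}$ is measurable so that $\varkappa=S{}_\#\nu$ is well defined --- exactly the obstacle you flag. That extra work is doable (joint measurability of $(a,t)\mapsto\dot x(t)$ as a pointwise limit of difference quotients, an Aumann/von Neumann-type selection on the nonempty closed sets $\{\mu\in\mathcal{P}(V):\dot x(t)=\int_V f(t,x(t),m(t),u,v)\mu(dv)\}$, then Fubini to get measurability into $\Lambda([s,r],\lambda,V)$), so your route closes; but the paper's packaging, which quantifies over whole trajectories rather than over time points, discharges all measurability in a single application of a standard selection theorem, at the price of leaving the Filippov step implicit in the nonemptiness of $\mathbf{b}$, which your version makes explicit.
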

\begin{proof}
	First, assume that $m(\cdot)=m(\cdot,s,m_*,\varkappa)$ for some $\varkappa\in\mathcal{D}_1[\alpha]$. This means that there exists $\chi\in\mathcal{P}(\mathcal{C}_{0,T})$ such that $m_*=m(s)=e_s{}_\#\chi$ and $\chi=\mathrm{traj}_{m(\cdot)}^{s,r}{}_\#\varkappa.$ Note that if $(x,u,\zeta)\in \td\times U\times \mathcal{V}$, then $x(\cdot)=\mathrm{traj}^{s,r}_{m(\cdot)}(x,u,\zeta)$ satisfies the differential inclusion
	\begin{equation}\label{equivalence:incl:F_1}
	\frac{d}{dt}x(t)\in F_1(t,x(t),m(t) ,u).
	\end{equation} This implies that, for any $t',t''\in [s,r]$,
	\begin{equation}\label{equivalence:equality:int_F_1}
	\mathrm{dist}\left(x(t'')-x(t'),\int_{t'}^{t''}F_1(t,x(t),m(t),u)\right)=0. 
	\end{equation} Introduce the mapping $\mathrm{Traj}^{s,r}_{m(\cdot)}:\td\times U\times\mathcal{V}\rightarrow \ctu{s}{r}$ by the rule
	$$\mathrm{Traj}^{s,r}_{m(\cdot)}(x,u,\zeta)\triangleq (\mathrm{traj}^{s,r}_{m(\cdot)}(x,u,\zeta),u) $$ and let $\nu\triangleq \mathrm{Traj}^{s,r}_{m(\cdot)}{}_\#\varkappa$. By construction we have that $\mathrm{p}^1{}_\#\nu=\chi$, $\hae{s}{}_\#\nu=\alpha$, $\et{t}{}_\#\nu=m(t)$. Finally, using (\ref{equivalence:equality:int_F_1}) we get the third condition.  
	
	Conversely, assume that there exists a probability $\nu\in\pctu{s}{r}$ such that conditions 1--3 are fulfilled. Let $\mathrm{SOL}_{m(\cdot)}^{s,r}$ denote the set of pairs $(x(\cdot),u)\in\ctu{s}{r}$ such that $x(\cdot)$ is absolutely continuous and (\ref{equivalence:incl:F_1}) holds for a.e. $t\in [s,r]$. We shall prove that $\nu$ is concentrated on $\mathrm{SOL}_{m(\cdot)}^{s,r}$. Indeed, there exists a $\nu$-null set $\mathcal{N}\subset\ctu{s}{r}$ such that, for any $(x(\cdot),u)\in(\ctu{s}{r})\setminus \mathcal{N}$ and any rational $t'$, $t''$,
	\begin{equation}\label{equivalence:equality_F_1_t_prime}
	x(t'')-x(t')=\int_{t'}^{t''}F_1(t,x(t),m(t),u). 
	\end{equation} Passing to the limit, we get that (\ref{equivalence:equality_F_1_t_prime}) for every $(x(\cdot),u)\in(\ctu{s}{r})\setminus\mathcal{N}$ and all $t',t''\in [s,r]$. Therefore, $(\ctu{s}{r})\setminus\mathcal{N}\subset \mathrm{SOL}_{m(\cdot)}^{s,r}$. Since $\nu(\mathcal{N})=0$, we can assume that $\nu$  is itself concentrated on $\mathrm{SOL}_{m(\cdot)}^{s,r}$. 
	
	Further, for $(x(\cdot),u)\in\mathrm{SOL}_{m(\cdot)}^{s,r}$, let $\mathbf{b}(x(\cdot),u)$ be the set of triples $(y,u,\zeta)\subset \tdu\times\mathcal{V}$ such that $y=x(s)$ and $x(\cdot)=\mathrm{traj}_{m(\cdot)}^{s,r}(y,u,\zeta)$. By the continuity of $\mathrm{traj}$ we have that $\mathbf{b}$ is upper semicontinuous. Moreover, $\mathbf{b}(x(\cdot),u)$ is nonempty when $(x(\cdot),u)\in\mathrm{SOL}_{m(\cdot)}^{s,r}$. Thus, by the Kuratowski–Ryll-Nardzewski selection theorem  \cite[Theorem 18.13]{Infinite_dimensional_analysis} there exists a measurable function $\mathbf{b}_0:\mathrm{SOL}_{m(\cdot)}^{s,r}\rightarrow\tdu\times\mathcal{V}$ such that $\mathbf{b}_0(x(\cdot),u)\in\mathbf{b}(x(\cdot),u)$. Put $\varkappa\triangleq \mathbf{b}_0{}_\#\nu$. By construction we have that
	$$\mathrm{p}^{1,2}{}_\#\varkappa=\hae{s}{}_\#\nu=\alpha. $$ This means that $\varkappa\in\mathcal{D}_1[\alpha]$. Further, put $$\chi\triangleq\mathrm{traj}^{s,r}_{m(\cdot)}{}_\#\varkappa= \mathrm{p}^1{}_\#\nu. $$ Therefore, $m(t)=e_t{}_\#\chi$. Since $m(s)=m_*$, we get that $m(\cdot)$ is produced by $s$, $m_*$ and $\varkappa\in\mathcal{D}_1[\alpha]$.
\end{proof}

Proposition~\ref{prop:motion_evivalence} and the definition of the $v$-stability (see Definition~\ref{def:v_stable}) imply the following.

\begin{corollary}\label{corollary:equivalence} A upper semicontinuous functions $\psi_2:[0,T]\times\ptd$ is $v$-stable iff $g(m)\leq \psi_2(T,m)$ and, given $s,r\in [0,T]$, $\alpha_*\in\mathcal{P}(\tdu)$, there exists $\nu\in\pctu{s}{r}$  such that
	\begin{enumerate}
		\item $e_s{}_\#\nu=\alpha_*$;
		\item for any $t',t''\in [s,r]$,
		$$\int_{\pctu{s}{r}}\mathrm{dist}\left(\et{t''}(a)-\et{t'}(a),\int_{t'}^{t''} F_1(t,\et{t}(a),\et{t}{}_\#\nu,\mathrm{p}^2(a))dt\right)\nu(d(a(\cdot)))=0; $$
		\item $\psi_2(s,\et{s}{}_\#\nu)\leq \psi_2(r,\et{r}{}_\#\nu).$
	\end{enumerate}
\end{corollary}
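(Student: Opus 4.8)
The plan is to read the corollary as nothing more than a transcription of Definition~\ref{def:v_stable} through the equivalence of Proposition~\ref{prop:motion_evivalence}; accordingly the whole argument reduces to matching the two lists of conditions and checking that the ranges of the quantifiers coincide. Two preliminary identifications do all the bookkeeping. First, $\mathcal{A}^c[m_*]=\Lambda(\td,m_*,U)$ is exactly the set of $\alpha\in\mathcal{P}(\tdu)$ with $\mathrm{p}^1{}_\#\alpha=m_*$, so quantifying over all pairs $(m_*,\alpha)$ with $m_*\in\ptd$ and $\alpha\in\mathcal{A}^c[m_*]$ is the same as quantifying over all $\alpha_*\in\mathcal{P}(\tdu)$, with $m_*$ recovered as $\mathrm{p}^1{}_\#\alpha_*$. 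Second, since $\et{s}=\mathrm{p}^1\circ\hae{s}$, for any $\nu$ one has $\et{s}{}_\#\nu=\mathrm{p}^1{}_\#(\hae{s}{}_\#\nu)$, so whenever $\hae{s}{}_\#\nu=\alpha_*$ we get $\et{s}{}_\#\nu=m_*$. The terminal clause $g(m)\le\psi_2(T,m)$ is literally the first item of both statements and needs no argument.

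For the forward implication I would fix $s<r$ and $\alpha_*\in\mathcal{P}(\tdu)$, set $m_*\triangleq\mathrm{p}^1{}_\#\alpha_*$, and invoke $v$-stability to obtain $\varkappa\in\mathcal{D}_1[\alpha_*]$ with $\psi_2(s,m_*)\le\psi_2(r,m(r,s,m_*,\varkappa))$. Writing $m(\cdot)\triangleq m(\cdot,s,m_*,\varkappa)$ and feeding this flow into the forward direction of Proposition~\ref{prop:motion_evivalence}, I produce $\nu\in\pctu{s}{r}$ satisfying $\hae{s}{}_\#\nu=\alpha_*$, $\et{t}{}_\#\nu=m(t)$, and the integral distance condition. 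Using $\et{t}{}_\#\nu=m(t)$ to replace $m(t)$ inside the vectogram $F_1$ converts condition (iii) of the proposition into item 2 of the corollary, item 1 is verbatim, and since $\et{r}{}_\#\nu=m(r)$ the inequality furnished by $v$-stability is exactly item 3.

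The converse runs the same chain backwards: starting from $\nu$ satisfying items 1--3, I set $m(t)\triangleq\et{t}{}_\#\nu$ and read items 1 and 2 as conditions (i)--(iii) of Proposition~\ref{prop:motion_evivalence} for this $m(\cdot)$, the proposition then yielding $\varkappa\in\mathcal{D}_1[\alpha_*]$ with $m(\cdot)=m(\cdot,s,m_*,\varkappa)$, whereupon item 3 rewrites as the $v$-stability inequality. The one point I expect to require genuine care --- and the only place the argument is more than bookkeeping --- is this self-referential substitution $m(t)=\et{t}{}_\#\nu$: in the proposition the flow $m(\cdot)$ is an external datum tied to the marginals of $\nu$ only through condition (ii), whereas the corollary eliminates $m(\cdot)$ by forcing the consistency $m(t)=\et{t}{}_\#\nu$ directly into the inclusion. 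I must verify that this folding is an equivalence in both directions, which holds precisely because condition (ii) is the identity being substituted, and that the $\nu$-integral together with the quantifier over $(t',t'')$ survive unchanged. Finally the degenerate case $s=r$, excluded by the proposition, is disposed of directly: take $\nu$ concentrated on constant trajectories, for which items 1--3 hold trivially and the $v$-stability inequality becomes an equality.
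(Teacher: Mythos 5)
Your proposal is correct and is exactly the argument the paper intends: the paper's own ``proof'' of Corollary~\ref{corollary:equivalence} is the single remark that it follows from Proposition~\ref{prop:motion_evivalence} together with Definition~\ref{def:v_stable}, and your write-up simply fills in that derivation (matching quantifier ranges, reading item~1 as $\hae{s}{}_\#\nu=\alpha_*$, and folding the external flow $m(\cdot)$ into the identity $m(t)=\et{t}{}_\#\nu$ in both directions).
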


\section{Properties of the shift operator}\label{sect:shift_operator}
Given $\tau>0$, define the operator $\Xi^\tau:\tdu\times\rd\rightarrow\tdu$ by the rule:
$$\Xi^\tau(x,u,w)\triangleq (x+\tau w,u). $$ Notice that the operator $\Xi^\tau$ can be regarded as an extension of the operator $\Theta^\tau$ defined above. This means that
\begin{equation}\label{shift:equality:Xi_Theta}
\mathrm{p}^1(\Xi^\tau(x,u,w))=\Theta^\tau(x,w).
\end{equation}

The following lemmas are concerned with the transfer of distribution of direction determined by the composition operation  $*$ defined by (\ref{notation:intro:composition}). 

\begin{lemma}\label{lm:continuity_Xi}
	Let $c$ be a positive number, $\alpha,\alpha'\in\mathcal{P}^2(\tdu)$, $\eta\in \mathcal{P}^2(\tdubc)$ be such that $\mathrm{p}^{1,2}{}_\#\eta=\alpha$, $\tau,\theta>0$, $\pi\in \Pi^0(\alpha',\alpha)$ be an optimal plan between $\alpha'$ and $\alpha$, then
	$$W_2(\Xi^\tau{}_\#\eta,\Xi^{\theta}{}_\#(\pi*\eta))\leq W_2(\alpha,\alpha')+|\tau-\theta| c. $$
\end{lemma}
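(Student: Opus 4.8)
The plan is to bound the distance by producing explicit couplings. I would route the estimate through the intermediate measure $\Xi^\tau{}_\#(\pi*\eta)$ and apply the triangle inequality for $W_2$,
\[
W_2(\Xi^\tau{}_\#\eta,\Xi^\theta{}_\#(\pi*\eta))\leq W_2(\Xi^\tau{}_\#\eta,\Xi^\tau{}_\#(\pi*\eta))+W_2(\Xi^\tau{}_\#(\pi*\eta),\Xi^\theta{}_\#(\pi*\eta)),
\]
so that the first term captures only the replacement of $\alpha$ by $\alpha'$ (the transport from $\eta$ to $\pi*\eta$) while the second captures only the change of step length from $\tau$ to $\theta$. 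Two elementary facts about the torus metric would be used repeatedly: translation invariance, $\|(x+\tau w)-(x'+\tau w)\|=\|x-x'\|$, and the bound $\|(x+\tau w)-(x+\theta w)\|\leq|\tau-\theta|\,\|w\|$. Both follow immediately from the definition of $\|\cdot\|$ on $\td$ by evaluating on common representatives in $\rd$.

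For the first term I would assemble a coupling from the optimal plan $\pi$ together with the disintegration $\eta(dw|x,u)$ of $\eta$ over its marginal $\alpha$. Concretely, let $\Sigma$ be the measure on $(\tdu)\times(\tdu)\times\mathbb{B}_c$ disintegrating as $\eta(dw|x,u)\,\pi(d((x',u'),(x,u)))$, and push it forward under the map $((x',u'),(x,u),w)\mapsto((x+\tau w,u),(x'+\tau w,u'))$. By the definition of the composition in (\ref{notation:intro:composition}), the $((x,u),w)$-marginal of $\Sigma$ equals $\eta$ and its $((x',u'),w)$-marginal equals $\pi*\eta$; hence this pushforward is a genuine transport plan between $\Xi^\tau{}_\#\eta$ and $\Xi^\tau{}_\#(\pi*\eta)$. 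Since the identical shift $\tau w$ is applied to both base points, translation invariance collapses the transport cost to $\|x-x'\|^2+\rho_U(u,u')^2$, whose integral against $\pi$ is exactly $W_2^2(\alpha',\alpha)$ by optimality of $\pi$. Thus the first term is at most $W_2(\alpha,\alpha')$.

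For the second term both measures are images of the single measure $\pi*\eta$, so the diagonal coupling given by the pushforward of $\pi*\eta$ under $(x',u',w)\mapsto((x'+\tau w,u'),(x'+\theta w,u'))$ is admissible. Here the two first coordinates differ only through the step length, so the cost is $\|(x'+\tau w)-(x'+\theta w)\|^2\leq|\tau-\theta|^2\|w\|^2\leq|\tau-\theta|^2c^2$ because $w\in\mathbb{B}_c$; integrating gives the bound $|\tau-\theta|c$. Summing the two estimates yields the asserted inequality. The one place demanding care is the marginal bookkeeping for $\Sigma$ — verifying, through the definition of $*$, that it really has $\eta$ and $\pi*\eta$ as the relevant marginals — whereas the metric computations themselves are routine once translation invariance and $\|w\|\leq c$ are in hand.
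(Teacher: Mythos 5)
Your proof is correct, and it reaches the bound by a slightly different route than the paper. The paper constructs a \emph{single} transport plan $\hat{\pi}$ between $\Xi^{\theta}{}_\#(\pi*\eta)$ and $\Xi^{\tau}{}_\#\eta$ (built, as in your construction of $\Sigma$, from $\pi$ and the disintegration $\eta(dw|x,u)$), and then splits the cost \emph{inside} the integral: the displacement $x'-x+(\theta-\tau)w$ is written as the sum of $\phi'=(x'-x,\hat{\rho}_U(u',u))$ and $\phi''=((\tau-\theta)w,0)$, and Minkowski's inequality in $L^2(\hat{\pi})$ gives the two terms $W_2(\alpha,\alpha')$ and $|\tau-\theta|c$ directly. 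You instead split at the level of the metric, inserting the intermediate measure $\Xi^\tau{}_\#(\pi*\eta)$ and using the triangle inequality for $W_2$, then handle each leg with its own coupling: the first leg uses translation invariance of the torus metric to reduce the cost to the cost of $\pi$ (hence $W_2(\alpha,\alpha')$ by optimality), and the second leg uses the diagonal coupling of $\pi*\eta$ with itself and $\|w\|\le c$. The two arguments rest on exactly the same three facts (optimality of $\pi$, translation invariance, and the support bound on $w$), and indeed the $W_2$ triangle inequality you invoke is itself proved by the gluing-plus-Minkowski device the paper uses; your version is more modular and makes the two effects (replacing $\alpha$ by $\alpha'$ versus changing the step from $\tau$ to $\theta$) visibly independent, while the paper's is more self-contained in that it never appeals to the triangle inequality in the Wasserstein space. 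Your marginal bookkeeping for $\Sigma$ is the key point and you carried it out correctly: the $((x,u),w)$-marginal is $\eta$ because the second marginal of $\pi$ is $\alpha$, and the $((x',u'),w)$-marginal is $\pi*\eta$ by the defining formula (\ref{notation:intro:composition}); note also that $\pi*\eta$ inherits concentration on $\tdubc$ from $\eta$, which justifies $\|w\|\le c$ in your second coupling. The only cosmetic discrepancy is that you write $\rho_U$ where the paper, in this section, has already passed to the equivalent metric $\hat{\rho}_U$; the argument is insensitive to which metric $U$ carries, provided the same one is used in the cost and in $W_2$.
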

\begin{proof}
	Let us consider the plan $\hat{\pi}$ between $\Xi^{\theta}{}_\#(\pi*\eta)$ and $\Xi^{\tau}{}_\#\eta$ given by the rule: for $\phi\in C_b(\tdu)$,
	\begin{equation*}
	\begin{split}
	\int_{\tdu\times \tdu}\phi&(y',u',y,u)\hat{\pi}(d(y',u',y,u))\\ &\triangleq
	\int_{\tdu\times\tdu}\int_{\mathbb{B}_c}\phi(x'+\theta w,u',x+\tau w,u)\eta(dw|(x,u)\pi(d(x',x)).
	\end{split}
	\end{equation*}
	Hence, using the Minkowski inequality for the functions $\phi',\phi'':\tdu\times\tdu\times\rd\rightarrow\mathbb{R}^{d+1}$, defined by the rule:
	$$\phi'(y',u',y,u,w)\triangleq (y'-y,\hat{\rho}_U(u',u)),\ \ \phi''(y',u',y,u,w)\triangleq  ((\tau-\theta)w,0),$$
	we conclude that
	\begin{equation*}\begin{split}
	W_2(&\Xi^{\theta}{}_\#(\pi*\eta),\Xi^\tau{}_\#\eta)\\&\leq \Bigl[\int_{\tdu\times\tdu} (\|y'-y\|^2+\hat{\rho}^2_U(u',u))\hat{\pi}(d(y',u',y,u))\Bigr]^{1/2}\\ &=
	\Bigl[\int_{\tdu\times\tdu}\int_{\mathbb{B}_c} (\|x'-x+(\theta-\tau)w\|^2+\hat{\rho}^2_U(u',u))\eta(dw|(x,u))\pi(d(x',u',x,u))\Bigr]^{1/2} \\
	&=\Bigl[\int_{\tdu\times\tdu}\int_{\mathbb{B}_c}\|\phi'(y',u',y,u,w)+\phi''(y',u',y,u,w)\|^2\eta(dw|(x,u))\pi(d(x',u',x,u))\Bigr]^{1/2}\\ 
	&\leq \Bigl[\int_{\tdu\times\tdu}\int_{\mathbb{B}_c}\|\phi'(y',u',y,u,w)\|^2\eta(dw|(x,u))\pi(d(x',u',x,u))\Bigr]^{1/2}
	\\ &{}\hspace{15pt}+\Bigl[\int_{\tdu\times\tdu}\int_{\mathbb{B}_c}\|\phi''(y',u',y,u,w)\|^2\eta(dw|(x,u))\pi(d(x',u',x,u))\Bigr]^{1/2}\\
	 &=
	\Bigl[\int_{\tdu\times\tdu}\int_{\mathbb{B}_c} (\|x'-x\|^2+\hat{\rho}^2_U(u',u))\eta(dw|(x,u))\pi(d(x',u',x,u))\Bigr]^{1/2} \\ &{}\hspace{15pt}+
	\Bigl[\int_{\tdu\times\tdu}\int_{\mathbb{B}_c} |\theta-\tau|^2\|w\|^2\eta(dw|(x,u))\pi(d(x',u',x,u))\Bigr]^{1/2}. 
	\end{split}
	\end{equation*} This implies the conclusion of the lemma.
\end{proof}

Recall that $L$ denotes the Lipschitz constant for the function $f$ w.r.t. $x$ and $m$, whereas $\varpi$ is the modulus of continuity of $f$ w.r.t. $t$, $u$ and $v$. Assuming that $L\geq 1$ and using the definition of the metric $\hat{\rho}_U$ (see~(\ref{equivalence:intro:hat_rho})), we have that
\begin{equation}\label{shift:ineq:f}
\begin{split}
\|f(t,x',m',u',v)&-f(t,x'',m'',u'',v)\|\\ &\leq \varpi(t'-t'')+L(\|x'-x''\|+W_2(m',m'')+\hat{\rho}_U(u',u'')).
\end{split}
\end{equation}

\begin{lemma}\label{lm:lipschtitz_F}
	Let $\alpha,\alpha'\in\mathcal{P}(\tdu)$, $m\triangleq\mathrm{p}^1{}_\#\alpha$, $m'\triangleq\mathrm{p}^1{}_\#\alpha'$, $\eta\in \ptdubc$ be such that $\mathrm{p}^{1,2}{}_\#\eta=\alpha$, $\pi\in \Pi^0(\alpha',\alpha)$ be an optimal plan between $\alpha'$ and $\alpha$, $\eta'\triangleq \pi*\eta$. Then
	\begin{equation*}
	\begin{split}
	\Bigl|\int_{\tdubc}\mathrm{dist}(w,F_1(t,x,m,u))\eta(d(x,u,w)&)\\- \int_{\tdubc}\mathrm{dist}(w,F_1(t',x,m',u&))\eta'(d(x,u,w))\Bigr|  \\&\leq \varpi(t'-t)+ 2LW_2(\alpha',\alpha).
	\end{split} 
	\end{equation*}
\end{lemma}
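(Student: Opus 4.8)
The goal is to bound the difference between two integrals of the distance function $\mathrm{dist}(w, F_1(\cdot))$, one taken against $\eta$ at time $t$ with base measure $\alpha$ (marginal $m$), the other against $\eta' = \pi*\eta$ at time $t'$ with base measure $\alpha'$ (marginal $m'$).

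Key structural point: by the definition of composition $\pi*\eta$ (equation notation:intro:composition), the disintegration $\eta(dw|(x,u))$ of the direction $w$ given $(x,u)$ is transported along the plan $\pi$. So integrating $\phi$ against $\eta'$ means integrating $\phi(x', u', w)$ against $\eta(dw|(x,u))\,\pi(d(x',u',x,u))$, where $(x',u')$ is the "new" point and $(x,u)$ the "old" one carrying the direction.

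So both integrals can be written over the same space $\tdu \times \tdu \times \mathbb{B}_c$ against $\eta(dw|(x,u))\pi(d(x',u',x,u))$: the $\eta$-integral reads $\mathrm{dist}(w, F_1(t,x,m,u))$ (depending on old data), the $\eta'$-integral reads $\mathrm{dist}(w, F_1(t',x',m',u'))$ (depending on new data).

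**Plan of the proof.**

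The plan is to write both integrals over the common product space and subtract the integrands pointwise. First I would use the Lipschitz property of the distance function: $|\mathrm{dist}(w, A) - \mathrm{dist}(w, B)| \leq d_H(A,B)$ (Hausdorff distance), which follows from $1$-Lipschitzness of $w \mapsto \mathrm{dist}(w,A)$ together with comparing to $B$. Then I would bound the Hausdorff distance $d_H(F_1(t,x,m,u), F_1(t',x',m',u'))$ using the structure of $F_1$ as a convex hull of the values of $f$ over $v \in V$. Since the Hausdorff distance of convex hulls of parametrized families is controlled by the sup over the parameter of the pointwise distance, inequality shift:ineq:f gives the bound $\varpi(t'-t) + L(\|x'-x\| + W_2(m',m') + \hat{\rho}_U(u',u))$ uniformly in $v$. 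Note the term $W_2(m',m)$ appears because the two measures plugged into $F_1$ differ; but $W_2(m',m) \leq W_2(\alpha',\alpha)$ since $m,m'$ are the first marginals of $\alpha,\alpha'$.

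**Integrating out.** After the pointwise bound, I would integrate against $\eta(dw|(x,u))\pi(d(x',u',x,u))$. The time term $\varpi(t'-t)$ is constant and integrates to itself. The remaining terms are $L(\|x'-x\| + W_2(m',m) + \hat{\rho}_U(u',u))$. Integrating $\|x'-x\| + \hat{\rho}_U(u',u)$ against the optimal plan $\pi$ and using Jensen / Cauchy–Schwarz to pass from the $L^1$ norm to the $W_2$ cost gives exactly $W_2(\alpha',\alpha)$ (since $\pi$ is optimal for the cost combining $\|x'-x\|^2$ and $\hat{\rho}_U^2(u',u)$, and these two coordinates are bounded in $L^2$ by $W_2(\alpha',\alpha)$). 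The $W_2(m',m)$ term is constant in the integration variables and bounded by $W_2(\alpha',\alpha)$ as noted; together with the $\|x'-x\|$ contribution this produces the factor $2L W_2(\alpha',\alpha)$ in the claimed bound.

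**Main obstacle.** The delicate point is bookkeeping the two separate appearances of the measure-argument and the spatial argument of $F_1$: the dependence on $m$ versus $m'$ contributes one copy of $W_2(\alpha',\alpha)$ (bounded crudely), while the dependence on $x$ versus $x'$ and $u$ versus $u'$ contributes the other copy (through the optimal transport cost), explaining the coefficient $2L$ rather than $L$. I would also need to justify that $w \mapsto \mathrm{dist}(w,\cdot)$ measurability and the Hausdorff estimate for convex hulls hold; these are routine but should be stated. The genuinely substantive step is the Hausdorff bound $d_H(F_1(t,x,m,u),F_1(t',x',m',u')) \leq \varpi(t'-t) + L(\|x'-x\| + W_2(m',m) + \hat{\rho}_U(u',u))$, which rests on inequality shift:ineq:f and the elementary fact that taking convex hulls is nonexpansive for the Hausdorff distance.
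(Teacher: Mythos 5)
Your proof is correct and follows essentially the same route as the paper: both rewrite the two integrals over the common space $\tdu\times\tdu\times\mathbb{B}_c$ via the disintegration $\eta(dw|(x,u))$ and the plan $\pi$, bound the pointwise difference of distances by $\varpi(t'-t)+L(\|x'-x\|+W_2(m',m)+\hat{\rho}_U(u',u))$, and then integrate, using optimality of $\pi$ together with $W_2(m',m)\leq W_2(\alpha',\alpha)$ to produce the coefficient $2L$. The only cosmetic difference is that where you invoke $1$-Lipschitzness of $\mathrm{dist}(w,\cdot)$ in the Hausdorff metric and nonexpansiveness of convex hulls, the paper proves exactly this inline by representing $F_1$ as the set of mixtures $\int_V f(\cdot,v)\,\omega_v(dv)$ and selecting an optimal $\omega_v^*\in\mathcal{P}(V)$.
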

\begin{proof}
	
	By definition of $\eta'$ we have that
	\begin{equation}\label{shift:estima:dist}
	\begin{split}
	\Bigl|\int_{\tdubc}\mathrm{dist}(w,F_1(t,x,m,u))\eta(d(x,u,w)&)\\- \int_{\tdubc}\mathrm{dist}(w,F_1(t',x,m',u&))\eta'(d(x,u,w))\Bigr| \\ \leq
	\int_{\tdu\times\tdu}\int_{\mathbb{B}_c}\Bigr|\mathrm{dist}(w,F_1(t,x,m,u)&)- \mathrm{dist}(w,F_1(t',x',m',u'))\Bigr|\\ 
	&{}\hspace{28pt}\eta(dw|x,u)\pi(d(x',u',x,u)).
	\end{split} 
	\end{equation}	
	To estimate $|\mathrm{dist}(w,F_1(t,x,m,u))- \mathrm{dist}(w,F_1(t',x',m',u'))|$ recall that
	$$F_1(t,x,m,u)=\left\{\int_V f(t,x,m,u,v)\omega_v(dv):\omega_v\in\mathcal{P}(V) \right\}. $$ Pick $\omega_v^*$ such that
	\begin{equation*}
	\begin{split}
	\mathrm{dist}(w,F_1(t,x,m,u))=\min_{\omega_v\in\mathcal{P}(V)}&{}\Bigl\|w-\int_V f(t,x,m,u,v)\omega_v(dv)\Bigr\|\\=&\Bigl\|w-\int_V f(t,x,m,u,v)\omega_v^*(dv)\Bigr\|.
	\end{split} 
	\end{equation*} We have that
	$$
	\mathrm{dist}(w,F_1(t',x',m',u'))\leq \Bigl\|w-\int_V f(t',x',m',u',v)\omega_v^*(dv)\Bigr\|. $$
	Thus, by (\ref{shift:ineq:f}) we obtain that
	\begin{equation*}
	\begin{split}
	\mathrm{dist}(w,F_1&(t,x,m,u))- \mathrm{dist}(w,F_1(t',x',m',u')) \\ &\leq
	\Bigl\|w-\int_V f(t,x,m,u,v)\omega_v^*(dv)\Bigr\|-\Bigl\|w-\int_V f(t',x',m',u',v)\omega_v^*(dv)\Bigr\| \\ &\leq \varpi(t'-t)+ L(\|x'-x\|+W_2(m',m)+\hat{\rho}_U(u',u)).
	\end{split}
	\end{equation*} The opposite inequality is established in the same way. Hence, we get the estimate
	\begin{equation*}
	\begin{split}
	|\mathrm{dist}(w,F_1(t,x,m,&u))- \mathrm{dist}(w,F_1(t',x',m',u'))|\\ &\leq \varpi(t'-t)+ L(\|x'-x\|+W_2(m',m)+\hat{\rho}_U(u',u)).
	\end{split} 
	\end{equation*} This, (\ref{shift:estima:dist}) and the Jensen's inequality yield that
	\begin{equation*}
	\begin{split}
	\Bigl|\int_{\tdubc}\mathrm{dist}(w,F_1(t,x,m,u))\eta(d(x,u,w)&)\\- \int_{\tdubc}\mathrm{dist}(w,F_1(t',x,m',u&))\eta'(d(x,u,w))\Bigr| \\ \leq L \int_{\tdu\times\tdu} (\|x'-x\|+\hat{\rho}_U(u',u))&\pi(d(x',u',x,u))\\+\varpi(t'-&t)+LW_2(m',m) \\ \leq
	L\Bigl[\int_{\tdu\times\tdu} (\|x'-x\|^2+\hat{\rho}_U(u',u&))^2\pi(d(x',u',x,u))\Bigr]^{1/2}\\+\varpi(t'-&t)+LW_2(\alpha',\alpha).
	\end{split}
	\end{equation*}
	Since $\pi$ is an optimal plan between $\alpha'$ and $\alpha$ we get that the right-hand side of this inequality is equal to $\varpi(t'-t)+2LW_2(\alpha',\alpha)$. This gives the conclusion of the lemma.
\end{proof}

\section{Proof of Theorem~\ref{th:characterisation}. Sufficiency}\label{sect:proof:sufficiency}

In this section we assume that the upper semicontinuous function $\psi_2:[0,T]\times\td\rightarrow\mathbb{R}$ is such that 
\begin{enumerate}[label=(\roman*)]
\item for every $m\in\ptd$, $\psi_2(T,m)\geq g(m)$, 
\item for every $t\in [0,T]$ and $\alpha\in \mathcal{P}(\tdu)$, 
$$\sup\{\vd_c\psi_2(t,\eta):\eta\in\mathcal{F}_1(s,\alpha)\}\geq 0, $$ where $c$ is constant independent  of $t$ and $\alpha$.
\end{enumerate} We aims to prove that in this case $\psi_2$ is $v$-stable. To this end given $s,r\in [0,T]$, $s<r$, $\alpha_*\in\mathcal{P}(\tdu)$, we construct $\nu\in\pctu{s}{r}$ satisfying conditions 1--4 of Corollary~\ref{corollary:equivalence}. 

Put $m_*\triangleq \mathrm{p}^1{}_\#\alpha_*$, $z^*\triangleq \psi_2(s,m_*)$. Let natural $n$ be such that $r-1/n>s$.

The proof of sufficiency part is based on the following.  

\begin{lemma}\label{lm:one_step} There exists $\varepsilon_n\in (0,1/n]$ satisfying the following property. For any $(t,\alpha)$ such that $t\in [s,r-1/n]$, $\psi_2(t,\mathrm{p}^1{}_\#\alpha)\geq z^*-(t-s)$, one can find $t^+\in (t+\varepsilon_n,t+1/n)$, $\alpha^+\in \ptdu$ and $\eta\in\ptdubc$ such that
	\begin{enumerate}
		\item $\psi_2(t^+,\mathrm{p}^1{}_\#\alpha^+)> \psi_2(t,\mathrm{p}^1{}_\#\alpha)-(t^+-t)/n$;
		\item $\mathrm{p}^{1,2}{}_\#\eta=\alpha$;
		\item $W_2(\Xi^{t^+-t}{}_\#\eta,\alpha^+)< (t^+-t)/n$;
		\item $$\int_{\tdubc}\mathrm{dist}(w,F_1(t,x,\mathrm{p}^1{}_\#\alpha,u))\eta(d(x,u,w))<1/n. $$
	\end{enumerate}
	
\end{lemma}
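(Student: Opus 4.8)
The plan is to prove this one‑step lemma (in the $v$‑stability form treated in this section) by a finite covering of the compact set of admissible base points, reading off a positive local step length at each point and taking the minimum over a finite subcover. Write $E\triangleq\{(t,\alpha):t\in[s,r-1/n],\ \psi_2(t,\mathrm{p}^1{}_\#\alpha)\ge z^*-(t-s)\}$. Since $\td\times U$ is compact, $\ptdu$ is $W_2$‑compact, the interval $[s,r-1/n]$ is compact, $\alpha\mapsto\mathrm{p}^1{}_\#\alpha$ is continuous, and $\psi_2$ is upper semicontinuous, the superlevel constraint is closed and hence $E$ is compact. It therefore suffices to attach to every $(t_0,\alpha_0)\in E$ a neighborhood and a positive local step so that each admissible pair in that neighborhood admits a configuration satisfying 1--4 with step at least that local length; half the least of the finitely many resulting lengths is the desired $\varepsilon_n$.

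Fix $(t_0,\alpha_0)\in E$ and put $m_0\triangleq\mathrm{p}^1{}_\#\alpha_0$. The infinitesimal hypothesis gives $\hat\eta_0\in\mathcal{F}_1(t_0,\alpha_0)$ with $\vd_c\psi_2(t_0,\hat\eta_0)>-1/(2n)$. Unwinding the joint $\limsup$ defining $\vd_c$, I select a single small $\tau_0\in(0,1/n)$ and $\eta'_0\in\ptdubc$ with $\mathrm{p}^{1,2}{}_\#\eta'_0=\alpha_0$, close enough to $\hat\eta_0$, so that simultaneously the difference quotient exceeds $-1/(2n)$ and, by narrow continuity of $\eta\mapsto\int\mathrm{dist}(w,F_1(t_0,x,m_0,u))\,\eta$ together with $\hat\eta_0$ being concentrated on $G_1(t_0,m_0)$, the defect $\int\mathrm{dist}(w,F_1(t_0,x,m_0,u))\eta'_0$ is below $1/(2n)$. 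Write $\mu_0\triangleq\Theta^{\tau_0}{}_\#\eta'_0$ and $z_0\triangleq\psi_2(t_0+\tau_0,\mu_0)$, so that $z_0>\psi_2(t_0,m_0)-\tau_0/(2n)$; the point $(t_0+\tau_0,\mu_0)$ is the frozen target the entire neighborhood will aim at.

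For $(t,\alpha)\in E$ near $(t_0,\alpha_0)$ I transfer $\eta'_0$ by composing with an optimal plan $\pi\in\Pi^0(\alpha,\alpha_0)$, setting $\eta\triangleq\pi*\eta'_0$, which gives $\mathrm{p}^{1,2}{}_\#\eta=\alpha$ (condition 2); I adjust the step to $\tau\triangleq\tau_0+(t_0-t)$, so that $t+\tau=t_0+\tau_0$; and I aim exactly at $\mu_0$ by using an optimal plan between $\mu_0$ and $\Theta^{\tau}{}_\#\eta$ to relabel the $\td$‑marginal of $\Xi^{\tau}{}_\#\eta$ while keeping the $U$‑coordinate coupled, producing $\alpha^+\in\ptdu$ with $\mathrm{p}^1{}_\#\alpha^+=\mu_0$ and $W_2(\Xi^{\tau}{}_\#\eta,\alpha^+)\le W_2(\mu_0,\Theta^{\tau}{}_\#\eta)$. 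Lemma~\ref{lm:continuity_Xi} bounds the latter by $W_2(\alpha,\alpha_0)+|t-t_0|c$, yielding condition 3, and Lemma~\ref{lm:lipschtitz_F} bounds the defect at $(t,\alpha)$ by $1/(2n)+\varpi(|t-t_0|)+2LW_2(\alpha,\alpha_0)$, yielding condition 4, both for $(t,\alpha)$ in a small enough neighborhood.

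The crux is condition 1, and this is where upper semicontinuity must be spent in the favorable direction. Because the target has been frozen, the reward is the constant $\psi_2(t+\tau,\mathrm{p}^1{}_\#\alpha^+)=\psi_2(t_0+\tau_0,\mu_0)=z_0$, so no lower bound on $\psi_2$ along a moving target is ever required; only the initial value $\psi_2(t,m)$ moves, and upper semicontinuity at $(t_0,m_0)$ gives $\psi_2(t,m)\le\psi_2(t_0,m_0)+\rho$ on a small neighborhood. Since $\tau\to\tau_0$ and $z_0>\psi_2(t_0,m_0)-\tau_0/(2n)$, shrinking the neighborhood so that $\rho+|t-t_0|/n<\tau_0/(2n)$ forces $z_0>\psi_2(t,m)-\tau/n$, i.e. condition 1, with $\tau\in(\tau_0/2,1/n)$. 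This provides the local neighborhood and local length $\tau_0/2$; a finite subcover of $E$ then yields $\varepsilon_n$. The single delicate point, and the one I would verify most carefully, is precisely this freezing trick: it is the only device I see that reconciles a \emph{uniform} lower bound on the step with the mere upper semicontinuity of $\psi_2$, and it relies essentially on the slack $W_2<\tau/n$ permitted in condition 3 together with the step adjustment that also freezes the terminal time.
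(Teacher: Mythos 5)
Your proof is correct and takes essentially the same route as the paper's: at each base point you unwind the $\limsup$ defining $\vd_c$ to produce a frozen target time and measure (so that upper semicontinuity of $\psi_2$ is spent only on the moving initial value), you propagate conditions 2--4 to a neighborhood via optimal plans together with Lemmas~\ref{lm:continuity_Xi} and~\ref{lm:lipschtitz_F}, and you conclude with a finite subcover of the compact set of admissible pairs to get a uniform $\varepsilon_n$. The paper implements the identical idea with two cosmetic differences: it covers the set augmented by a reward coordinate $z\in[z^*-(t-s),\psi_2(t,\mathrm{p}^1{}_\#\alpha)]$ instead of invoking upper semicontinuity directly in the neighborhood estimate, and it freezes the whole measure $\alpha^+\triangleq\Xi^{2\tau_{h,\mu}}{}_\#\gamma_{h,\mu}$ rather than only its $\td$-marginal, which makes your relabeling construction unnecessary, since $\mathrm{p}^1{}_\#\bigl(\Xi^{\tau_0}{}_\#\eta_0'\bigr)=\Theta^{\tau_0}{}_\#\eta_0'$ is automatically the frozen marginal.
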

\begin{proof}
	By assumption, for any $(h,\mu)\in [s,r-1/n]\times\ptdu$, there exists $\gamma\in \ptdubc$ $\{\tau_k\},\{\epsilon_k\}\subset (0,+\infty)$, $\{\gamma_k\}\subset \ptdubc$ such that
	 $\tau_k,\epsilon_k\rightarrow 0$ as $k\rightarrow\infty$, $\mathrm{p}^{1,2}{}_\#\gamma_k=\mathrm{p}^{1,2}{}_\#\gamma=\mu$, $W_2(\gamma_k,\gamma)<\epsilon_k$, $$\psi_2(t+2\tau_k,\Theta^{2\tau_k}{}_\#\gamma_k)> \psi(t,\mathrm{p}^1{}_\#\mu)-2\tau_k\epsilon_k$$ and $$\int_{\tdubc}\mathrm{dist}(w,F_1(t,x,\mathrm{p}^1{}_\#\mu),u)\gamma(d(x,u,w))=0. $$

	Choosing $k$ sufficiently large and using Lemma~\ref{lm:lipschtitz_F}, we can find $\tau_{h,\mu}\in \{\tau_k\}$, $\gamma_{h,\mu}\in\{\gamma_k\}$ such that
	\begin{itemize}
	\item $\mathrm{p}^{1,2}{}_\#\gamma_{h,\mu}=\mu$;
	\item $3\tau_{h,\mu}<1/n$;
	\item  \begin{equation}\label{proof_suff:ineq:psi_2_h_mu}
	\psi_2(t+2\tau_{h,\mu},\Theta^{2\tau_{h,\mu}}{}_\#\gamma_{h,\mu})> \psi_2(t,\mathrm{p}^1{}_\#\mu)-\frac{2\tau_{h,\mu}}{2n};
	\end{equation}
	\item 
	\begin{equation}\label{proof_suff:ineq:F_h_mu}
	\int_{\tdubc}\mathrm{dist}(w,F_1(t,x,\mathrm{p}^1{}_\#\mu),u)\gamma_{h,\mu}(d(x,u,w))<\frac{1}{2n}.
	\end{equation} 
\end{itemize}
	Denote $h^+_{h,\mu}\triangleq h+2\tau_{h,\mu}$, $\mu^+_{h,\mu}\triangleq \Xi^{2\tau_{h,\mu}}{}_\#\gamma_{h,\mu}$.
	
	Now, let $$\mathcal{K}\triangleq \{(t,\alpha,z)\in [s,r-1/n]\times\ptdu\times \mathbb{R}:z\in [z^*-(t-s),\psi_2(t,\mathrm{p}^1{}_\#\alpha)]\}.$$ Since $\psi_2$ is upper semicontinuous, the set $\mathcal{K}$ is compact. Given $(h,\mu)\in [s,r-1/n]\times \ptdu$, let $\mathcal{E}(h,\mu)$ be the set of triples $(t,\alpha,z)\in\mathcal{K}$ such that for some $\eta\in\ptdubc$ the following inequalities are fulfilled:
	\begin{enumerate}[label=(E\arabic*)]
		\item $|t-h|<\tau_{h,\mu}$;
		\item $\psi^2(h^+_{h,\mu},\mathrm{p}^1{}_\#\mu^+_{h,\mu})>z-(h^+_{h,\mu}-t)/n$;
		\item $W_2(\Xi^{h^+_{h,\mu}-t}{}_\#\eta,\mu^+_{h,\mu})<(h^+_{t,\mu}-h)/n$;
		\item $$\int_{\tdubc}\mathrm{dist}(w,F_1(t,x,\mathrm{p}^1{}_\#\alpha),u)\eta(d(x,u,w))<\frac{1}{n}.$$ 
	\end{enumerate}
	The choice of $h^+_{h,\mu}$, $\mu^+_{h,\mu}$ and inequalities (\ref{proof_suff:ineq:psi_2_h_mu}), (\ref{proof_suff:ineq:F_h_mu}) yield that $$\{(h,\mu,z)\in [s,r-1/n]\times\ptdu\times\mathbb{R}:z\in [z^*-(h-s),\psi_2(h,\mathrm{p}^1{}_\#\mu)]\}\subset\mathcal{E}(h,\mu). $$ Thus, $\{\mathcal{E}(h,\mu)\}_{h\in [s,r-1/n],\mu\in\ptd}$ is a cover of $\mathcal{K}$. Let us prove that it is an open cover. To this end we are to show that each set $\mathcal{E}(h,\mu)$ is open. Let $(t,\alpha,z)\in\mathcal{E}(h,\mu)$, $\varepsilon$ be a positive number. Pick $(t',\alpha',z')\in\mathcal{K}$  such that $|t-t'|^2+W_2^2(\alpha,\alpha')+|z-z'|^2<\varepsilon^2$. This implies that $|t-t'|,\ \ W_2(\alpha,\alpha'),\ \ |z-z'|<\varepsilon$. We shall show that, for sufficiently small $\varepsilon$, $(t',\alpha',z')\in\mathcal{E}(h,\mu)$. Pick $\eta$ such that conditions (E1)--(E4) are satisfied for $(t,\alpha,z)$ and $\eta$. Let $\pi$ be an optimal plan between $\alpha'$ and $\alpha$. Set $\eta'\triangleq \pi*\eta$. 
	
	Condition (E1) holds for $(t',\alpha',z')$ and $\eta'$ when $\varepsilon<\tau_{h,\mu}-|t-h|$.
	We have that if $$\varepsilon(1+1/n)<\psi_2(h^+,\mathrm{p}^1{}_\#\mu^+_{h,\mu})-z+(h^+_{h,\mu}-t)/n$$ then condition (E2) is fulfilled for $(t',\alpha',z')$ and $\eta'$. Further, by Lemma~\ref{lm:continuity_Xi}
	$$W_2(\Xi^{h^+_{h,\mu}-t'}{}_\#\eta',\Xi^{h^+_{h,\mu}-t}{}_\#\eta)\leq W_2(\alpha',\alpha)+|t'-t|c. $$ Thus, when $$\varepsilon(1+c)<(h^+_{t,\mu}-h)/n-W_2(\Xi^{h^+_{h,\mu}-t}{}_\#\eta,\mu^+_{h,\mu})$$ condition (E3) is valid for $(t',\alpha',z')$ and $\eta'$. Finally, by Lemma~\ref{lm:lipschtitz_F}
	\begin{equation*}
	\begin{split}
	\Bigl|\int_{\tdubc}\mathrm{dist}(w,F_1(t,x,\mathrm{p}^1{}_\#\alpha,u))\eta(d(x,u,w)&)\\- \int_{\tdubc}\mathrm{dist}(w,F_1(t',x,\mathrm{p}^1{}_\#\alpha',u&))\eta'(d(x,u,w))\Bigr|  \\&\leq \varpi(t'-t)+ 2LW_2(\alpha',\alpha).
	\end{split} 
	\end{equation*} Consequently, picking $\varepsilon$ so small that
	$$\varpi(\varepsilon)+2L\varepsilon<\frac{1}{n}-\int_{\tdubc}\mathrm{dist}(w,F_1(t,x,\mathrm{p}^1{}_\#\alpha),u)\eta(d(x,u,w)), $$ we get condition (E4) for $(t',\alpha',z')$ and $\eta'$.

	Since $\{\mathcal{E}(h,\mu)\}$ is an open cover of $\mathcal{K}$, we can find a finite number of pairs $\{(h_i,\mu_i)\}_{i=1}^{I}$ such that $$\mathcal{K}=\bigcup_{i=1}^{I} \mathcal{E}(h_i,\mu_i).$$ Put
	$$\varepsilon_n\triangleq \min_{i=1,\ldots I}\tau_{h_i,\mu_i}. $$ For $(t,\alpha)\in [s,r-1/n]\times\ptdu$ and $z=\psi_2(t,\mathrm{p}^1{}_\#\alpha)$ there exists a number $i\in \{1,\ldots,I\}$ such that $(t,\alpha,z)\in\mathcal{E}(h_i,\mu_i)$. Pick $\eta$ satisfying conditions (E1)--(E4) for $(t,\alpha,z)$ and $(h_i,\mu_i)$. Set $t^+\triangleq h^+_{h_i,\mu_i}$, $\alpha^+\triangleq \mu^+_{h_i,\mu_i}$.
\end{proof}
\begin{remark}\label{remark:psi_estimate} Notice that $(t^+,\alpha^+)$ constructed by Lemma~\ref{lm:one_step} is such that $\psi_2(t^+,\mathrm{p}^1{}_\#\alpha^+)\geq z^*-(t^+-s)$. 
\end{remark}

Let us introduce operations used to prove the sufficiency part of Theorem~\ref{th:characterisation}. First, given $t^1,t^2\in [0,T]$, define the operator $L^{t^1,t^2}:\tdu\times \rd\rightarrow\ctu{t^1}{t^2}$ by the rule:
$$L^{t^1,t^2}(y,u,w)\triangleq (x(\cdot),u), $$ with $x(t)\triangleq y+(t-t^1)w$.

Now, let $t^1<t^2<t^3$, $a_1=(x_1(\cdot),u_1)\in \ctu{t^1}{t^2}$, $a_2=(x_2(\cdot),u_2)\in \ctu{t^2}{t^3}$. Assume that  $x_1(t^2)=x_2(t^2)$, $u_1=u_2$. Concatenation of $a_1$, $a_2$ is a pair $a_1\odot a_2\triangleq(x(\cdot),u)\in\ctu{t^1}{t^3}$ such that $u=u_1=u_2$, whereas
$$x(t)\triangleq \left\{\begin{array}{cc}
x_1(t), & t\in [t^1,t^2] \\
x_2(t), & t\in [t^2,t^3].
\end{array}\right. $$

Further, let $\nu_1\in\pctu{t^1}{t^2}$, $\nu_2\in\pctu{t^2}{t^3}$ satisfy $\mu\triangleq\hae{t^2}{}_\#\nu_1=\hae{t^2}{}_\#\nu_2$. Let $\nu_2(\cdot|y,u)$ be the disintegration of $\nu_2$ along $\mu$ i.e. each $\nu_{2}(\cdot|y,u_2)$ is concentrated on the set of pairs $(x_2(\cdot),u)$ such that $x(t^2)=y$, $u=u_2$ and, for any $\phi\in C_b(\ctu{t^2}{t^3})$,
$$\int_{\ctu{t^2}{t^3}}\phi(a_2)\nu_2(da_2)= \int_{\tdu}\int_{\ctu{t^2}{t^3}}\phi(a_2)\nu_{2}(da_2|y,u)\mu(d(y,u)). $$ Define $\nu_1\odot\nu_2$ by the rule: for $\phi\in C_b(\ctu{t^1}{t^3})$,
$$\int_{\ctu{t^1}{t^3}}\phi(a)(\nu_1\odot\nu_2)(da)\triangleq \int_{\ctu{t^1}{t^2}}\int_{\ctu{t^2}{t^3}}\phi(a_1\odot a_2)\nu_{2}(da_2|\hae{t^2}(a_1))\nu_1(da_1). $$
Notice that, for $t\in [t^1,t^2]$,
$$\hae{t}{}_\#(\nu_1\odot\nu_2)\triangleq \hae{t}{}_\#\nu_1, $$ whereas when $t\in [t^2,t^3]$,
$$\hae{t}{}_\#(\nu_1\odot\nu_2)\triangleq \hae{t}{}_\#\nu_2. $$

\begin{proof}[Proof of Theorem~\ref{th:characterisation}. Sufficiency]

	Given a sufficiently large natural number $n$, we construct a number $J_n$ and sequences $\{t_n^j\}_{j=0}^{J_n}\subset [s,r]$, $\{\alpha_n^j\}_{j=0}^{J_n},\{\mu_n^j\}_{j=0}^{J_n}\subset \ptdu$, $\{\eta_n^j\}_{j=1}^{J_n}, \{\gamma_n^j\}_{j=1}^{J_n}\subset\ptdubc$ by the following rules.
	\begin{enumerate}
		\item Set $t_n^0\triangleq s$, $\mu_n^0=alpha_n^0\triangleq\\alpha_*$;
		\item If $t_n^j<r-1/n$, then  pick $t^+$, $\alpha^+$ and $\eta$ satisfying conditions of Lemma~\ref{lm:one_step} for $t=t_n^j$, $\alpha=\alpha_n^j$. Put $t_n^{j+1}\triangleq t^+$, $\alpha_n^{j+1}\triangleq \alpha^+$, $\eta_n^{j+1}\triangleq \eta$. Further, let $\pi_n^j\in \Pi^0(\mu_n^j,\alpha_n^j)$. Set $\gamma_n^{j+1}\triangleq\pi_n^j*\eta_n^{j+1}$, $\mu_n^{j+1}\triangleq \Xi^{t_n^{j+1}-t_n^j}{}_\#\gamma_n^{j+1}$.
		\item If $t_n^j\geq r-1/n$, then set $J_n\triangleq j$.
	\end{enumerate} Notice that, since $t_n^{j+1}-t_n^j\in [\varepsilon_n,1/n]$ where $\varepsilon_n$ is a positive number, this process is finite and $t_n^{J_n}\in [r-1/n,r)$. Furthermore, 
	\begin{equation*}
	\psi_2(t_n^{j+1},\mathrm{p}^1{}_\#\alpha_n^{j+1})\geq \psi_2(t_n^j,\mathrm{p}^1{}_\#\alpha_n^j) - (t_n^{j+1}-t_n^j)/n.
	\end{equation*} This gives that
	\begin{equation}\label{proof_suff:ineq:psi_alpha}
	\psi_2(t_n^{J_n},\mathrm{p}^1{}_\#\alpha_n^{J_n})\geq \psi_2(s,\mathrm{p}^1{}_\#\alpha_*)-(r-s)/n.
	\end{equation}
	
	The following inequality is fulfilled: \begin{equation}\label{proof_suff:ineq:alpha_mu_j}
	W_2(\alpha_n^j,\mu_n^j)\leq (t_n^j-s)/n.
	\end{equation} Indeed, (\ref{proof_suff:ineq:alpha_mu_j}) is obviously fulfilled for $j=0$. If it holds for some $j$, then we have that
	\begin{equation*}
	W_2(\alpha_n^{j+1},\mu_n^{j+1})\leq W_2(\Xi^{t_n^{j+1}-t_n^j}{}_\#\gamma_n^{j+1},\Xi^{t_n^{j+1}-t_n^j}{}_\#\eta_n^{j+1})+W_2(\Xi^{t_n^{j+1}-t_n^j}{}_\#\eta_n^{j+1},\mu_n^{j+1}).
	\end{equation*} Since  $\gamma_n^{j+1}\triangleq\pi_n^j*\eta_n^{j+1}$ and $\pi_n^j\in \Pi^0(\mu_n^j,\alpha_n^j)$, Lemma~\ref{lm:continuity_Xi} gives that $W_2(\Xi^{t_n^{j+1}-t_n^j}{}_\#\gamma_n^{j+1},\Xi^{t_n^{j+1}-t_n^j}{}_\#\eta_n^{j+1})\leq W_2(\alpha_n^j,\mu_n^j)$. Lemma~\ref{lm:one_step} states that $W_2(\Xi^{t_n^{j+1}-t_n^j}{}_\#\eta_n^{j+1},\mu_n^{j+1})\leq (t_n^{j+1}-t_n^j)/n$. Combining this estimates with the assumption, we obtain inequality (\ref{proof_suff:ineq:alpha_mu_j}) for $j+1$.
	
	Further, we claim that 
	\begin{equation}\label{proof_suf:ineq:F_estimate}
	\int_{\tdubc}\mathrm{dist}(w,F_1(t_n^j,x,\mathrm{p}^1{}_\#\gamma_n^{j+1},u))\gamma_n^{j+1}(d(x,u,w))\leq (2L(t_n^j-s)+1)/n.
	\end{equation} Indeed, by Lemma~\ref{lm:one_step} we have that
	\begin{equation}\label{proof_suf:ineq:F_1_estima_eta}
	\int_{\tdubc}\mathrm{dist}(w,F_1(t_n^j,x,\mathrm{p}^1{}_\#\eta_n^{j+1},u))\eta_n^{j+1}(d(x,u,w))\leq 1/n. 
	\end{equation} Since $\gamma^{j+1}_n=\pi_n^j*\eta^{j+1}_n$ and $\pi_n^j\in \Pi^0(\mu_n^j,\alpha_n^j)$, Lemma~\ref{lm:lipschtitz_F} yields that 
	\begin{equation*}
	\begin{split}
	\Bigl|\int_{\tdubc}\mathrm{dist}&(w,F_1(t_n^j,x,\mathrm{p}^1{}_\#\gamma_n^{j+1},u))\gamma_n^{j+1}(d(x,u,w))\\ &- \int_{\tdubc}\mathrm{dist}(w,F_1(t_n^j,x,\mathrm{p}^1{}_\#\eta_n^{j+1},u))\eta_n^{j+1}(d(x,u,w))\Bigr|\\&{}\hspace{200pt}\leq 2LW_2(\alpha_n^j,\mu_n^j).
	\end{split}
	\end{equation*}  This, (\ref{proof_suff:ineq:alpha_mu_j}) and (\ref{proof_suf:ineq:F_1_estima_eta}) imply (\ref{proof_suf:ineq:F_estimate}).
	
	Let $\gamma_n^{J_n+1}$ be a probability on $\tdubc$ such that $\mathrm{p}^{1,2}{}_\#\gamma_n^{J_n+1}=\mu_n^{J_n}$ and 
	\begin{equation}\label{proof_suf:ineq:choice_gamma_J_np}
	\int_{\tdubc}\mathrm{dist}(w,F_1(t_n^{J_n},x,\mathrm{p}^1{}_\#\gamma_n^{J_n+1},u))\gamma_n^{J_n+1}(d(x,u,w))\leq 1/n. 
	\end{equation} Set $t_n^{J_n+1}\triangleq r$, $\mu_n^{J_n+1}\triangleq \Xi^{t_n^{J_n+1}-t_n^{J_n}}{}_\#\gamma_n^{J_n+1}$. Since $r-t^{J_n}_n\leq 1/n$ and the probability $\gamma_n^{J_n+1}$ is concentrated on $\tdubc$, we have that $W_2(\mu_n^{J_n+1},\mu_n^{J_n})\leq c/n$. This and (\ref{proof_suff:ineq:alpha_mu_j}) yield the estimate
	\begin{equation}\label{proof_suff:ineq:mu_J_n_p_alpha_J_n}
	W_2(\mu_n^{J_n+1},\alpha_n^{J_n})\leq (r-s+c)/n.
	\end{equation}
	
	Now, define the probabilities on pairs consisting of motion and control. First, for $j=0,\ldots,J_n$, set $\nu_n^j\triangleq L^{t_n^j,t_{n}^{j+1}}{}_\#\gamma_n^{j+1}$. Notice that  $\hae{t_n^{j+1}}{}_\#\nu_n^j=\hae{t_n^{j+1}}{}_\#\nu_n^{j+1}$ if $j=0,\ldots, J_n-1$. Thus, the probability \begin{equation}\label{proof_suf:intro:nu_n_composition}
	\nu_n\triangleq \nu_n^0\odot\ldots\odot \nu_n^{J_n} 
	\end{equation} is well defined. Let us mention properties of $\nu_n$. First, we have that \begin{equation}\label{proof_suf:equality:t_n_nu_n_mu_n}
	\hae{t_n^j}{}_\#\nu_n=\mu_n^j,\ \ j=0,\ldots,J_n+1.
	\end{equation} Thus,
	\begin{equation}\label{proof_suf:equality:nu_n_alpha_star}
	\hae{s}{}_\#\nu_n=\alpha_*.
	\end{equation}
	Inequality (\ref{proof_suff:ineq:mu_J_n_p_alpha_J_n}) and equality (\ref{proof_suf:equality:t_n_nu_n_mu_n}) imply that
	\begin{equation}\label{proof_suf:ineq:nu_n_r_alpha_n}
	W_2(\hae{r}{}_\#\nu_n,\alpha_n^{J_n})\leq (r-s+c)/n.
	\end{equation} 
	
	Now, given $t',t''\in [s,r]$, let us evaluate the value
	$$\int_{\ctu{s}{r}}\mathrm{dist}\left(x(t'')-x(t'),\int_{t'}^{t''}F_1(t,x(t),\et{t}{}_\#\nu_n,u)dt \right)\nu_n(d(x(\cdot),u)). $$ Let $I_n'$ and $I_n''$ be such that $t'\in [t_n^{I_n'},t_n^{I_n'+1}]$, $t''\in [t_n^{I_n''},t_n^{I_n''+1}]$. Without loss of generality, we shall assume that $I_n'<I_n''$. Set $\tau_n^{I_n'}\triangleq t'$. For $j=I_n'+1,\ldots, I_n''$, let $\tau_n^j\triangleq t_n^j$. Finally, put $\tau_n^{I_n''+1}\triangleq t''$. We have that
	\begin{equation}\label{proof_suf:ineq:dist_F_1_first}
	\begin{split}
	\mathrm{dist}\Bigl(&x(t'')-x(t'),\int_{t'}^{t''}F_1(t,x(t),\et{t}{}_\#\nu_n,u)dt \Bigr) \\ &= \mathrm{dist}\left(\sum_{j=I_n'}^{I_n''}x(\tau_n^{j+1})-x(\tau_n^j),\sum_{j=I_n'}^{I_n''}\int_{\tau_n^j}^{\tau_n^{j+1}}F_1(t,x(t),\et{t}{}_\#\nu_n,u)dt\right) \\ &\leq
	\sum_{j=I_n'}^{I_n''}\mathrm{dist}\left(x(\tau_n^{j+1})-x(\tau_n^j), \int_{\tau_n^j}^{\tau_n^{j+1}}F_1(t,x(t),\et{t}{}_\#\nu_n,u)dt\right).
	\end{split}
	\end{equation}
	Recall that  $\nu_n$ is a concatenation of the probabilities $\nu_n^j$ (see (\ref{proof_suf:intro:nu_n_composition})), whereas each probability $\nu_n^j$ is concentrated on the set of pairs $(x(\cdot),u)\in\ctu{t_n^j}{t_n^{j+1}}$, where $x(\cdot)$ is $c$-Lipschitz continuous. Furthermore, by (\ref{proof_suf:equality:t_n_nu_n_mu_n}) we have that $W_2(\et{t}{}_\#\nu_n,\mathrm{p}^1{}_\#\mu_n^j)\leq c(t-t_n^j)$ when $t\in [\tau_n^j,\tau_n^{j+1}]$. This implies that
	\begin{equation}\label{proof_suf:ineq:dist_int_firts}
	\begin{split}
	\int_{\ctu{s}{r}}&\mathrm{dist}\left(x(\tau_n^{j+1})-x(\tau_n^j), \int_{\tau_n^j}^{\tau_n^{j+1}}F_1(t,x(t),\et{t}{}_\#\nu_n,u)dt\right)\nu_n(d(x(\cdot),u)) \\ &=\int_{\ctu{s}{r}}\mathrm{dist}\left(x(\tau_n^{j+1})-x(\tau_n^j), \int_{\tau_n^j}^{\tau_n^{j+1}}F_1(t,x(t),\et{t}{}_\#\nu_n,u)dt\right)\nu_n^j(d(x(\cdot),u)) \\ &\leq \int_{\ctu{s}{r}}\mathrm{dist}\left(x(\tau_n^{j+1})-x(\tau_n^j), \int_{\tau_n^j}^{\tau_n^{j+1}}F_1(t_n^j,x(t_n^j),\mu_n^j,u)dt\right)\nu_n^j(d(x(\cdot),u))\\ &{}\hspace{150pt}+(\tau_n^{j+1}-\tau_n^j)(\varpi(1/n)+2Lc/n).
	\end{split}
	\end{equation} Recall that $\nu_n^j\triangleq L^{t_n^j,t_n^{j+1}}{}_\#\gamma_n^{j+1}$. Therefore,
	\begin{equation*}\label{proof_suf:ineq:dist_int_second}
	\begin{split}
	\int_{\ctu{s}{r}}&\mathrm{dist}\left(x(\tau_n^{j+1})-x(\tau_n^j), \int_{\tau_n^j}^{\tau_n^{j+1}}F_1(t_n^j,x(t_n^j),\mu_n^j,u)dt\right)\nu_n^j(d(x(\cdot),u)) \\ 
	&= (\tau_n^{j+1}-\tau_n^j)\int_{\tdubc}\mathrm{dist}(w,F_1(t_n^j,x,\mu_n^j,u))\gamma_n^{j+1}(d(x,u,w)).
	\end{split}
	\end{equation*} This, (\ref{proof_suf:ineq:F_estimate}) and (\ref{proof_suf:ineq:choice_gamma_J_np}) give that
	\begin{equation*}
	\begin{split}
	\int_{\ctu{s}{r}}&\mathrm{dist}\left(x(\tau_n^{j+1})-x(\tau_n^j), \int_{\tau_n^j}^{\tau_n^{j+1}}F_1(t_n^j,x(t_n^j),\mu_n^j,u)dt\right)\nu_n^j(d(x(\cdot),u)) \\ 
	&\leq (\tau_n^{j+1}-\tau_n^j)(2L(r-s)+1)/n.
	\end{split}
	\end{equation*} Applying this estimate for the right-hand side of (\ref{proof_suf:ineq:dist_int_firts}), we get
	\begin{equation*}
	\begin{split}
	\int_{\ctu{s}{r}}&\mathrm{dist}\left(x(\tau_n^{j+1})-x(\tau_n^j), \int_{\tau_n^j}^{\tau_n^{j+1}}F_1(t,x(t),\et{t}{}_\#\nu_n,u)dt\right)\nu_n(d(x(\cdot),u)) \\ &\leq
	(\tau_n^{j+1}-\tau_n^j)(\varpi(1/n)+2Lc/n+2L(r-s)/n+1/n).
	\end{split}
	\end{equation*} Combining this inequality with (\ref{proof_suf:ineq:dist_F_1_first}), we conclude that
	\begin{equation}\label{proof_suf:ineq:int_F_1_final}
	\begin{split}
	\int_{\ctu{s}{r}}&\mathrm{dist}\left(x(t'')-x(t'),\int_{t'}^{t''}F_1(t,x(t),\et{t}{}_\#\nu_n,u) dt\right)\nu_n(d(x(\cdot),u))\\ &\leq (r-s)(\varpi(1/n)+2Lc/n+2L(r-s)/n+1/n).
	\end{split}
	\end{equation}

	By construction the probability $\nu_n$ is concentrated on the set of pairs $(x(\cdot),u)$ where $x(\cdot):[s,r]\rightarrow \td$ is $c$-Lipschitz continuous, $u\in U$. Thus, the sequence $\{\nu_n\}$ is relatively compact. Pick a subsequence $\{\nu_{n_k}\}$ and a probability $\nu\in\pctu{s}{r}$  such that
	\begin{equation}\label{proof_suf:convergence:nu_n_k}
	W_2(\nu_{n_k},\nu)\rightarrow 0\text{ as }k\rightarrow\infty. 
	\end{equation} We shall prove that $\nu$ satisfy conditions of Corollary~\ref{corollary:equivalence}. This will imply $v$-stability of $\psi_2$. 
	
	First, we have that $\hae{s}{}_\#\nu=\alpha_*$. Thus, the first condition of Corollary~\ref{corollary:equivalence} is valid. Further, passing to the limit in (\ref{proof_suf:ineq:int_F_1_final}) we obtain the second condition. To show the third condition, notice that by construction of $\nu$ (see (\ref{proof_suf:convergence:nu_n_k})) and (\ref{proof_suf:ineq:nu_n_r_alpha_n})
	$\{(t_{n_k}^{J_{n_k}},\alpha_{n_k}^{J_{n_k}})\}$ converges to $(r,\hae{r}{}_\#\nu)$. Taking into account (\ref{proof_suff:ineq:psi_alpha}) and upper semicontinuity of $\psi_2$, we get
	$$\psi_2(r,\et{r}{}_\#\nu)\geq \psi_2(s,\mathrm{p}^1{}_\#\alpha_*). $$ This, the third condition is also fulfilled. 
	
	Since $s$, $r$, and $\alpha$ are chosen arbitrarily, using Corollary~\ref{corollary:equivalence}, we conclude that $\psi_2$ is $v$-stable. 
\end{proof}

\section{Proof of Theorem~\ref{th:characterisation}. Necessity}\label{sect:proof:necessity}

Now we assume that $\psi_2:[0,T]\times \td \rightarrow \mathbb{R}$ is $v$-stable.  We shall prove that, for any $s\in [0,T]$, $m\in\ptd$, and $\alpha\in\ptdu$,$$\sup\{\vd_c\psi_2(t,\eta):\eta\in\mathcal{F}_1(s,\alpha)\}\geq 0, $$ where $c=R$.

For $s,r\in [0,T]$, $s<r$, let $\Delta^{s,r}$ be an operator from $\mathcal{C}_{s,r}\times U$ to  $\td\times U\times\rd$ acting by the rule:
$$\Delta^{s,r}(x(\cdot),u)\triangleq \left(x(s),u,\frac{x(r)-x(s)}{r-s}\right). $$

Notice that if $x(\cdot)$ is $c$-Lipschitz continuous, then $\Delta^{s,r}(x(\cdot),u)\in \td\times U\times \mathbb{B}_c$.

\begin{proof}[Proof of Theorem~\ref{th:characterisation}. Necessity]
	Since $\psi_2$ is $v$-stable, we have (see Corollary~\ref{corollary:equivalence}) that, for any $s,r\in [0,T]$, $s<r$, $\alpha\in\ptdu$, one can find $\nu\in\pctu{s}{r}$  such that
	\begin{enumerate}
		\item $e_t{}_\#\nu=\alpha$;
		\item for any $t',t''\in [s,r]$,
		$$\int_{\pctu{s}{r}}\mathrm{dist}\left(\et{t''}(a)-\et{t'}(a),\int_{t'}^{t''} F_1(t,\et{t}(a),\et{t}{}_\#\nu,\mathrm{p}^2(a))dt\right)\nu(d(a(\cdot)))=0; $$
		\item $\psi_2(s,\et{s}{}_\#\nu)\leq \psi_2(r,\et{r}{}_\#\nu).$
	\end{enumerate}
	
	Put $m\triangleq \mathrm{p}^1{}_\#\alpha$.
	
	Notice that $\nu$ is concentrated on the set of pairs $(x(\cdot),u)$, where $x(\cdot)$ is $R$-Lipschitz continuous.
	
	Put $\eta^{s,r}\triangleq \Delta^{s,r}{}_\#\nu$. We have that each probability $\eta^{s,r}$ is concentrated on the set $\td \times U\times\mathbb{B}_R$. Thus, the set $\{\eta^{s,r}\}_{r>s}$ is relatively compact in $\ptdubc[R]$.
	
	We have that there exist a sequence $\{r_n\}_{n=1}^\infty$ and a probability $\eta\in\ptdu$ such that  $r_n\rightarrow s$ and $W_2(\eta^{s,r_n},\eta)\rightarrow 0$ as $n\rightarrow\infty$. By construction $\mathrm{p}^{1,2}{}_\#\eta^{s,r}=\alpha$ for any $r>s$. Consequently, $\mathrm{p}^{1,2}{}_\#\eta=\alpha$. Since $\eta^{s,r_n}$ is concentrated on $\tdu\times \mathbb{B}_R$, $\Theta^{r_n-s}{}_\#\nu^{s,r_n}=\et{r_n}{}_\#\nu^{s,r}$ and $\psi_2(s,m)\geq \psi_2(r_n,\et{r_n}{}_\#\nu^{s,r})$, we have that
	\begin{equation}\label{proof_nec:ineq:derivative}
	\vd_R\psi_2(s,\eta)\geq 0.
	\end{equation} 
	
	Further, we have that 
	\begin{equation*}
	\begin{split}
	0&=\int_{\ctu{s}{r}}\mathrm{dist}\left(x(r)-x(s),\int_{s}^{r} F_1(t,x(t),\et{t}{}_\#\nu,u)dt\right)\nu(d(x(\cdot),u)) \\ &\geq
	\int_{\ctu{s}{r}}\mathrm{dist}\Bigl(x(r)-x(s),(r-s) F_1(s,x(s),m,u)\Bigr)\nu(d(x(\cdot),u))\\&{}\hspace{100pt}-(\alpha(r-s)+2LR(r-s))(r-s).
	\end{split}
	\end{equation*}
	
	Dividing both sides by $(r-s)$ and taking into account the definition of $\eta^{s,r}$, we get
	\begin{equation*}\label{proof_nec:ineq:dist_nu_s_r}
	\int_{\tdu\times\rd} \mathrm{dist}\left(w,F_1(s,x,m,u)dt\right)\eta^{s,r}(d(x,u,w))\leq \alpha(r-s)+2LR(r-s).
	\end{equation*} Letting $r=r_n$ and passing to the limit when $n\rightarrow\infty$, we conclude that
	$\eta\in\mathcal{F}_1(s,\alpha)$. This and (\ref{proof_nec:ineq:derivative}) imply the necessity part of the theorem.
\end{proof}

\bibliography{th_33_dirderivative}
\end{document}